\newtheorem{thm}{Theorem}[section] 
\newtheorem{lm}[thm]{Lemma}
\newtheorem{prop}[thm]{Proposition}
\newtheorem{claim}{Claim}
\newtheorem{pbm}[thm]{Problem}
\theoremstyle{definition}
\newtheorem{df}[thm]{Definition}
\newcommand{\alg}[1]{{\mathbf #1}}        
\newcommand{\A}{\alg {A}}
\newcommand{\B}{\alg {B}}
\newcommand{\D}{\alg {D}}
\newcommand{\C}{\alg {C}}
\newcommand{\G}{\alg {G}}
\newcommand{\R}{\mathbb{R}}
\renewcommand{\H}{\alg {H}}
\newcommand{\Co}{\alg{Co}(\mathbb{R}^n,A)}
\newcommand{\second}{\alg{Co}(\mathbb{R}^2,A)}
\begin{document}

\author[K.Adaricheva]{Kira Adaricheva$^*$}

\address{Stern College for Women, Yeshiva University, 245 Lexington Ave., New York , NY 10106, USA} \email{adariche@yu.edu}

\begin{abstract}
A closure system with the anti-exchange axiom is called a convex geometry. One geometry is called a sub-geometry of the other if its closed sets form a sublattice in the lattice of closed sets of the other. We prove that convex geometries of relatively convex sets in $n$-dimensional vector space and their finite sub-geometries satisfy the $n$-Carousel Rule, which is the strengthening of the $n$-Carath$\acute{e}$odory property. We also find another property, that is similar to the simplex partition property and does not follow from $2$-Carusel Rule, which holds in sub-geometries of $2$-dimensional geometries of relatively convex sets.  
\end{abstract}

\thanks{\emph{Keywords and phrases}: convex geometry, relatively convex sets, Carat$\acute{\text{e}}$odory property.}
\thanks{\textup{2000} \emph{Mathematics Subject Classification}: 05B25,06B15, 06B05, 51D20}

\thanks{$^*$Stern College for Women, Yeshiva University, 245 Lexington Ave., New York, NY 10016, USA}

\title[Representing finite convex geometries]{Representing finite convex geometries by relatively convex sets}
\maketitle

\section{Introduction}
A closure system $\A=(A, -)$, i.e. a set $A$ with a closure operator $-:2^A\rightarrow 2^A$ defined on $A$, is called \emph{a convex geometry} (see \cite{AGT}), if it is a zero-closed space (i.e. $\overline{\emptyset}=\emptyset$) and it satisfies \emph{the anti-exchange axiom}, i.e.
\[
\begin{aligned}
x\in\overline{X\cup\{y\}}\text{ and }x\notin X
\text{ imply that }y\notin\overline{X\cup\{x\}}\\
\text{ for all }x\neq y\text{ in }A\text{ and all closed }X\subseteq A.
\end{aligned}
\]

A convex geometry $\A=(A,-)$ is called finite, if set $A$ is finite.

Very often, a convex geometry can be represented by its collection of closed sets. There is a convenient description of those collections of subsets of a given finite set $A$, which are, in fact, the closed sets of a convex geometry on $A$: if $\mathcal{A} \subseteq 2^A$ satisfies\\
(1) $\emptyset \in \mathcal{A}$;\\
(2) $X\cap Y \in \mathcal{A}$, as soon as $X,Y \in \mathcal{A}$;\\
(3) $X \in \mathcal{A}$ and $X\not = A$ implies $X \cup \{a\} \in \mathcal{A}$, for some $a \in A\setminus X$,\\
then $\mathcal{A}$ represents the collection of closed sets of a convex geometry $\A=(A,\mathcal{A})$.

A reader can be referred to \cite{D},\cite{EdJa} for the further details of combinatorial and lattice-theoretical aspects of finite convex geometries.
 
For convex geometries $\A=(A,-)$ and $\B=(B,\tau)$, one says that $\A$ is a sub-geometry of $\B$, if there is a one-to-one map $\phi$ of closed sets of $\A$ to closed sets of $\B$ such that $\phi(X\cap Y)=\phi(X)\cap \phi(Y)$, and $\phi(\overline{X\cup Y})=\tau(\phi(X)\cup \phi(Y))$, where $X,Y \subseteq A$, $\overline{X}=X$, $\overline{Y}=Y$. In other words, the lattice of closed subsets of $\A$ is a sublattice of the lattice of closed sets of $\B$. When geometries $\A$ and $\B$ are defined on the same set $X=A=B$, we also call $\B$ \emph{a strong extension} of $\A$. Extensions of finite convex geometries were considered in \cite{AdNa} and \cite{AGT}, the more systematic treatment of extensions of finite lattices was given in \cite{Na}.

Given any class $\mathcal{L}$ of convex geometries, we will call it \emph{universal}, if an arbitrary finite convex geometry is a sub-geometry of some geometry in $\mathcal{L}$.

One of main results in \cite{AGT} proves that a specially designed class of convex geometries $\mathcal{AL}$ is universal. Namely, $\mathcal{AL}$ consists of convex geometries of the form $Sp(A)$, each of which is built on a carrier set of an algebraic and dually algebraic lattice $A$ and whose closed sets are all complete lower subsemilattices of $A$ closed with respect to taking joins of non-empty chains. At the same time, a subclass of all \emph{finite} geometries from class $\mathcal{AL}$ cease to be universal,
see \cite{Ad2} and \cite{AGT}.

In this paper, we want to consider another conveniently designed class of convex geometries, in fact, even an infinite hierarchy of classes.

Given a set of points $A$ in Euclidean $n$-dimensional space $\mathbb{R}^n$, one defines a closure operator $-:2^A\rightarrow 2^A$ on $A$ as follows: for any $Y \subseteq A$, $\overline{Y}= ch(Y) \cap A$, where $ch$ stands for \emph{the convex hull}. One easily verifies that such an operator satisfies the anti-exchange axiom. Thus, $(A,-)$ is a convex geometry, which also will be denoted as $\Co$. We will call such convex geometry \emph{a geometry of relatively convex sets} (assuming that these are convex sets ``relative'' to $A$). 
The convex geometries of relatively convex sets were studied in \cite{Hu},\cite{Be} and \cite{Ad}.

For any geometry $C=\alg{Co}(\mathbb{R}^m,A)$, we will call $n \in \mathbb{N}$ \emph{a dimension} of $C$, if $n$ is the smallest number such that $C$ could be represented as $\alg{Co}(\mathbb{R}^n,A)$, for appropriate $A \subseteq R^n$. In particular, $n \leq m$, and $n \leq p-1$, if $A$ is a finite non-empty set of cardinality $p>1$.

Let $\mathcal{C}_n$ be the class of convex geometries of relatively convex sets of dimension $\leq n$, and let $\mathcal{C}$ be the the class of of all convex geometries of relatively convex sets of finite dimension (thus, including $\mathcal{C}_n$, $n \in \mathbb{N}$, as subclasses). By $\mathcal{C}_B$ we denote a subclass of $\mathcal{C}$ that consists of geometries of convex sets relative to bounded sets, i.e. $\alg{Co}(\mathbb{R}^n,A)$, for some $n$ and $A\subseteq B$, where $B$ is a ball in $R^n$. By $\mathcal{C}_f$ we denote a subclass if \emph{finite} convex geometries in $\mathcal{C}$.

It is known that none of $\mathcal{C}_n$ is universal, due to the $n$-Carath$\acute{\text{e}}$odory property that holds on any sub-geometry of geometry from $\mathcal{C}_n$ (see, for example, \cite{Be}), but fails on any geometry of dimension $n+1$. We introduce a stronger property called the $n$-Carousel Rule and show that it holds on sub-geometries of $\mathcal{C}_n$. It allows to build a series of finite convex geometries $C_n$ such that $C_n$ satisfies the $n$-Carath$\acute{\text{e}}$odory property, but cannot be a sub-geometry of any geometry in $\mathcal{C}_n$. On the other hand, $C_n$ is a sub-geometry of some geometry in $\mathcal{C}_{n+1}$. We also prove that the
so-called Sharp Carousel Rule holds in all sub-geometries in $\mathcal{C}_2$, a slight modification of \emph{the simplex partition property} from \cite{MoSo}.

It was shown in \cite{Be} that \emph{every finite} closure system can be embedded into some geometry in the class $\mathcal{C}$, in particular, this class is universal for all finite convex geometries. This observation is a direct consequence of deep and complex result proved in \cite{PuTu} that every finite lattice is a sublattice of a finite partition lattice. Thus, class $\mathcal{C}$ can not be considered as specific to finite convex geometries. It is worth noting that the construction in \cite{Be} uses convex sets relative to $A$ which is the collection of lines, in particular, $A$ is always an unbounded set. 

This leaves the following open questions:

\begin{pbm}\label{first}
Is class $\mathcal{C}_B$ of geometries of convex sets relative to bounded sets universal? Is the class
$\mathcal{C}_f$ of finite geometries of relatively convex sets universal?
\end{pbm}

Note that the second question of two is a modification of Problem 3 from \cite{AGT}.

\section{Carath$\acute{\text{\smaller{E}}}$odory Property and Carousel Rule}

We recall that a convex geometry $(A,-)$ satisfies the $n$-\emph{Carath}$\acute{e}$\emph{odory property}, if
$x \in \overline{S}$, $S \subseteq A$, implies $x \in \overline{\{a_0,\dots,a_n\}}$ for some $a_0,\dots,a_n \in S$. Besides, $a_0$ can be taken to be any pre-specified element of $S$.

\begin{prop}\emph{(\cite[Lemma 3.2]{Hu},\cite[Proposition 25]{Be})} For any $n \in \mathbb{N}$ and $A \subseteq \R^n$, convex geometry $\Co$ satisfies the $n$-Carath$\acute{\text{e}}$odory property.
\end{prop}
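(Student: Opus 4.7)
The plan is to reduce the statement to the classical Carath\'eodory theorem in $\mathbb{R}^n$, augmented by a single affine-elimination step that preserves a designated vertex. By the definition of the closure operator one has $\overline{S}=ch(S)\cap A$, so $x\in\overline{S}$ means simultaneously $x\in A$ and $x\in ch(S)$. The classical Carath\'eodory theorem then furnishes $b_1,\dots,b_{n+1}\in S$ together with weights $\lambda_i\geq 0$, $\sum\lambda_i=1$, such that $x=\sum_{i=1}^{n+1}\lambda_i b_i$. If the pre-specified $a_0\in S$ already appears among the $b_i$ we are done; otherwise write $x=0\cdot a_0+\sum_{i=1}^{n+1}\lambda_i b_i$ and move on to the elimination.

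To force $a_0$ into the representation at the cost of dropping one $b_i$, I would exploit that the $n+2$ points $a_0,b_1,\dots,b_{n+1}$ in $\mathbb{R}^n$ are affinely dependent. Choose scalars $\mu_0,\mu_1,\dots,\mu_{n+1}$, not all zero, with $\sum_i\mu_i=0$ and $\mu_0 a_0+\sum_{i\geq 1}\mu_i b_i=0$. Then for every $t\in\mathbb{R}$ the identity
\[
x=(-t\mu_0)\,a_0+\sum_{i=1}^{n+1}(\lambda_i-t\mu_i)\,b_i
\]
holds, with the new coefficients still summing to $1$. I would then select $t$ by a minimum-ratio rule so that exactly one $b_i$-coefficient becomes $0$ while every coefficient stays $\geq 0$. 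When $\mu_0=0$, this is simply the usual Carath\'eodory pivot among $b_1,\dots,b_{n+1}$. When $\mu_0\neq 0$, the identity $\sum\mu_i=0$ forces some $\mu_j$ with $j\geq 1$ to have sign opposite to $\mu_0$, and the $t$ of sign opposite to $\mu_0$ with smallest admissible magnitude simultaneously keeps $-t\mu_0\geq 0$ and drives some $\lambda_j-t\mu_j$ to zero without spoiling any other nonnegativity.

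After this elimination, $x$ is a convex combination of $a_0$ together with $n$ of the $b_i$'s; relabeling them as $a_1,\dots,a_n$ gives $x\in ch(\{a_0,a_1,\dots,a_n\})$, and since $x\in A$ we conclude $x\in ch(\{a_0,a_1,\dots,a_n\})\cap A=\overline{\{a_0,a_1,\dots,a_n\}}$, as required. The only delicate point in the whole argument is verifying that the pivot step never needs to consume $a_0$ itself, and this is precisely what the sign constraint $\sum\mu_i=0$ purchases; I expect this sign bookkeeping to be the main obstacle in a fully spelled-out proof, though it is short once the correct extremal $t$ is identified.
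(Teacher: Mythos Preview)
Your argument is correct. The key case split on whether $\mu_0=0$ is handled properly: when $\mu_0=0$ the standard pivot drops one $b_i$ and leaves $a_0$ with coefficient $0$, and when $\mu_0\neq 0$ the sign condition $\sum_i\mu_i=0$ guarantees some $\mu_j$ ($j\geq 1$) of opposite sign, so the minimum-ratio choice of $t$ with $\operatorname{sgn}(t)=-\operatorname{sgn}(\mu_0)$ keeps $-t\mu_0\geq 0$ while zeroing out exactly one $b_j$-coefficient. Either way you end with $x\in ch(\{a_0,a_1,\dots,a_n\})\cap A$.

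As for comparison with the paper: there is nothing to compare. The paper does not supply its own proof of this proposition; it is stated with attribution to \cite[Lemma~3.2]{Hu} and \cite[Proposition~25]{Be} and then used as input for the $n$-Carousel Rule. Your write-up is precisely the classical Carath\'eodory argument plus one Radon-type elimination to retain a designated vertex, which is the standard route to the ``pre-specified $a_0$'' clause.
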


Our aim is to formulate a stronger property, which we call the $n$-\emph{Carousel Rule}, extending to arbitrary finite dimensions the $2$-Carousel Rule introduced in \cite{AW1}.

\begin{df} A convex geometry $(A,-)$ satisfies the $n$-Carousel Rule, if $x,y \in \overline{S}$,
$S \subseteq A$, implies $x \in \overline{\{y, a_1,\dots,a_n\}}$ for some $a_1,\dots,a_n \in S$.
\end{df}

Note that the $n$-Carath$\acute{\text{e}}$odory property follows from the $n$-Carousel Rule. Indeed, if $y$ is chosen among elements of $S$, and $x \in \overline {S}$, then, according to the $n$-Carousel Rule, $x \in \overline{\{y, a_1,\dots,a_n\}}$ for some $a_1,\dots,a_n \in S$, which is also a desired conclusion for the $n$-Carath$\acute{\text{e}}$odory property.

\begin{lm}\label{Co with Rule} For any $n \in \mathbb{N}$ and $A \subseteq R^n$, convex geometry $\Co$ satisfies the $n$-Carousel Rule.
\end{lm}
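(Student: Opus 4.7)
The plan is to reduce, via the $n$-Carath\'eodory property (Proposition~2.1), to a finite subset $S' \subseteq S$ so that $ch(S')$ becomes a compact polytope in $\R^n$, and then use a ray-extension argument to land on a lower-dimensional face of that polytope, where classical Carath\'eodory produces the desired $n$ points.

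First I would dispose of the trivial case $x = y$: if $S = \emptyset$ there is nothing to prove; otherwise pick any $a \in S$ and set $a_1 = \dots = a_n = a$, so that $x = y \in \overline{\{y, a_1, \dots, a_n\}}$ trivially. Assume henceforth $x \neq y$. Apply Proposition~2.1 twice to obtain $S_x, S_y \subseteq S$ with $|S_x|, |S_y| \leq n+1$, $x \in ch(S_x)$ and $y \in ch(S_y)$; set $S' = S_x \cup S_y$. Then $S'$ is finite, $ch(S')$ is a compact polytope in $\R^n$, and $x, y \in ch(S')$. Parametrize the ray from $y$ through $x$ by $r(t) = (1-t)y + tx$ and put $t^* = \sup\{t \geq 0 : r(t) \in ch(S')\}$. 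By compactness and continuity of $r$, we have $1 \leq t^* < \infty$ and $z := r(t^*)$ lies on the boundary $\partial\, ch(S')$.

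Now choose a supporting hyperplane $H$ of $ch(S')$ at $z$. If $z = \sum_i \lambda_i s_i$ with $s_i \in S'$, $\lambda_i > 0$, $\sum_i \lambda_i = 1$, then since $z \in H$ and every $s_i$ lies in the closed half-space bounded by $H$ that contains $ch(S')$, each such $s_i$ must itself lie on $H$. Hence $z \in ch(S' \cap H)$. Since $H$ is an affine subspace of dimension $n-1$, classical Carath\'eodory applied inside $H$ furnishes points $a_1, \dots, a_n \in S' \cap H \subseteq S$ (padded by repetitions if fewer suffice) such that $z \in ch(\{a_1, \dots, a_n\})$. Because $t^* \geq 1$, the point $x = r(1)$ lies on the segment $[y, z]$, so
\[
x \in ch(\{y, z\}) \subseteq ch(\{y, a_1, \dots, a_n\}).
\]
Combined with $x \in A$, this gives $x \in ch(\{y, a_1, \dots, a_n\}) \cap A = \overline{\{y, a_1, \dots, a_n\}}$, as required.

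The main obstacle I anticipate is the case where $ch(S)$ is unbounded, since then one cannot simply extend the ray from $y$ through $x$ to a boundary point of $ch(S)$ itself; the device of passing first to the finite subset $S'$ via Proposition~2.1 sidesteps this cleanly, replacing $ch(S)$ by the compact polytope $ch(S')$. A smaller subtlety is the justification that every $S'$-representation of the boundary point $z$ uses only points lying on the supporting hyperplane $H$, which is the standard observation that a convex combination of points in a closed half-space lies on the bounding hyperplane only if every point with positive weight does.
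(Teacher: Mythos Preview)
Your proof is correct and follows essentially the same strategy as the paper: reduce via Carath\'eodory to a finite polytope $P$ containing $x$ and $y$, show that $x$ lies in the cone from $y$ over some facet of $P$, and finish with Carath\'eodory in one lower dimension. The only difference is presentational---the paper simply asserts the cone decomposition $P \subseteq \bigcup_i ch(\{y\}\cup F_i)$ over the facets $F_i$, whereas your ray-shooting plus supporting-hyperplane argument is precisely the standard justification of that decomposition.
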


\begin{proof} Consider $\G=\Co$, and let $x,y \in \overline {S}$, for some $S \subseteq A$. 

Due to the $n$-Carath$\acute{\text{e}}$odory property, $x \in \overline{\{c_0, c_1,\dots,c_n\}}$ and $y \in \overline{\{b_0, b_1,\dots,b_n\}}$ for some
$c_0,b_0,\dots, c_n,b_n \in S$. In other words, points $x,y$ belong to a convex polytope $P$ in $R^n$ with the vertices among $c_0,b_0,\dots, c_n,b_n$. Suppose $F_1,\dots, F_k$ are the faces of this polytope, i.e. they are at most $(n-1)$-dimensional convex polytopes. 
For arbitrary $y \in P$, we have $P \subseteq \bigcup_{i\leq k} P_i$, where $P_i=ch(y\cup F_i)$, $i=1,\dots,k$. Hence, $x \in \overline {y \cup F_i}$ for some $i\leq k$. Now, due to the $n$-Carath$\acute{\text{e}}$odory property, $x \in \overline {\{y, f_1,\dots,f_n\}}$ for some vertices $f_1,\dots,f_n$ of $F_i$, which are also elements of $S$.
Thus, the conclusion of the $n$-Carousel Rule holds. 
\end{proof}

Our next goal is to show that the $n$-Carousel Rule is preserved on finite sub-geometries.

\begin{lm}\label{Sub} If geometry $\H$ satisfies the $n$-Carousel Rule, and $\G$ is a finite sub-geometry of $\H$, then $\G$ satisfies the $n$-Carousel Rule.
\end{lm}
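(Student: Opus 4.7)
The plan is to pull the problem back to $\H$ through the embedding $\phi$ of closed sets of $\G$ into closed sets of $\H$, apply the $n$-Carousel Rule in $\H$, and then reinterpret the conclusion inside $\G$. The delicate point is that $\phi$ acts on closed sets, not on points, so each relevant element of $\G$ must be represented by a carefully chosen point of $\H$.

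First I would record a purely lattice-theoretic observation (where $\overline{\,\cdot\,}$ and $\tau$ denote the closures of $\G$ and $\H$, respectively): for every element $x$ in the carrier of $\G$, the closed set $\overline{\{x\}}$ is join-irreducible in the lattice of closed sets of $\G$, with unique lower cover $K_x = \overline{\{x\}} \setminus \{x\}$. Indeed, any closed set $K \subsetneq \overline{\{x\}}$ must miss $x$ (otherwise $\overline{\{x\}}$ would fail to be the smallest closed set containing $x$), so $K \subseteq \overline{\{x\}} \setminus \{x\}$, and the latter is forced to be closed.

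I would then choose, for each $x$, a representative $\tilde{x} \in \phi(\overline{\{x\}}) \setminus \phi(K_x)$; this is non-empty because $\phi$ is order-reflecting and $K_x \subsetneq \overline{\{x\}}$. The decisive property of $\tilde{x}$ is that, for every closed set $T$ of $\G$ with $x \notin T$, $\tilde{x} \notin \phi(T)$: the intersection $T \cap \overline{\{x\}}$ is a proper closed subset of $\overline{\{x\}}$, hence by join-irreducibility contained in $K_x$, so $\phi(T) \cap \phi(\overline{\{x\}}) = \phi(T \cap \overline{\{x\}}) \subseteq \phi(K_x)$, which excludes $\tilde{x}$.

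Finally, given $x, y \in \overline{S}$ in $\G$, I would fix such a $\tilde{x}$, pick any $\tilde{y} \in \phi(\overline{\{y\}})$, and set $S' = \bigcup_{s \in S} \phi(\overline{\{s\}})$. Since $S$ is finite and $\phi$ preserves finite joins, $\tau(S') = \phi(\overline{S})$, so $\tilde{x}, \tilde{y} \in \tau(S')$. The $n$-Carousel Rule in $\H$ then supplies points $p_1, \ldots, p_n \in S'$ with each $p_i \in \phi(\overline{\{s_i\}})$ for some $s_i \in S$, and $\tilde{x} \in \tau(\{\tilde{y}, p_1, \ldots, p_n\}) \subseteq \phi(\overline{\{y, s_1, \ldots, s_n\}})$. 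The defining property of $\tilde{x}$ then forces $x \in \overline{\{y, s_1, \ldots, s_n\}}$, which is exactly the desired conclusion. The main technical obstacle is the construction of $\tilde{x}$; finiteness of $\G$ is needed both to guarantee that $\phi$ preserves the join expressing $\overline{S}$ and to make the maximal-cover argument for $K_x$ available, so that the Carousel Rule in $\H$ can be transferred back through a single witness point.
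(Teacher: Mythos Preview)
Your argument is correct and is essentially the same as the paper's: both pick a witness $\tilde{x}\in\phi(\overline{\{x\}})\setminus\phi(\overline{\{x\}}\setminus\{x\})$, push $x,y$ and $S$ through $\phi$, apply the $n$-Carousel Rule in $\H$, and use the meet- and join-preservation of $\phi$ to pull the conclusion back to $\G$. The only cosmetic difference is that you phrase it as a direct proof and make the join-irreducibility of $\overline{\{x\}}$ explicit, whereas the paper argues by contradiction.
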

\begin{proof} Suppose $\H=(H,-)$, $\G=(G,\tau)$ and $\phi$ is a one-to-one mapping from closed sets of $\G$ to closed sets of $\H$ that preserves the intersection and the closure of finite unions of sets.

Let assume that $\G$ does not satisfy the $n$-Carousel Rule. It means that, for some $x,y \in G$ and $S \subseteq G$, we have $x,y \in \overline{S}$, but $x \not \in \overline{\{y, s_1,\dots,s_n\}}$, for any $s_1,\dots,s_n \in S$. In any finite convex geometry $(A,-)$, for any $a \in A$, the subset $\overline{a}\setminus a$ is closed. Hence, set $X=\overline{x}\setminus x$ is closed in $\G$. According to our assumption, $\overline{x}\cap \overline{\overline{y}\cup \overline{s_1}\cup \dots\cup\overline{s_n}} \subseteq X$, for any $s_1,\dots, s_n \in S$.

Take $x' \in \phi(\overline{x})\setminus \phi(X)$ and $y'\in \phi(\overline{y})$. 
Note that $\overline{S}=\bigcup\{\overline{s}: s \in S\}$, hence $S'=\phi(\overline{S})=\tau(\bigcup(\phi(\overline{s}): s \in S))$. Since $x',y' \in S'=\tau(\bigcup(\phi(\overline{s}): s \in S))$ and $\H$ satisfies the $n$-Carousel Rule, we have $x' \in \tau (\{y',s'_1,\dots,s'_n\})$, for some $s'_i\in \phi(\overline{s_i})$, $s_i \in S$. It follows $x' \in \phi(\overline{x})\cap \tau(\phi(\overline{y})\cup \phi(\overline{s_1})\cup \dots \cup \phi(\overline{s_n}))=\phi(\overline{x})\cap \phi(\overline{\overline{y}\cup \overline{s_1}\cup \dots\cup\overline{s_n}})$, which means $\phi(\overline{x}\cap \overline{\overline{y}\cup \overline{s_1}\cup \dots\cup\overline{s_n}}) \not\subseteq \phi(X)$, a contradiction.
\end{proof}

\section{Convex geometries $C_n$}

Using the $n$-Carousel Rule, it will not be difficult to built an example of a finite convex geometry that cannot be a sub-geometry of relatively convex sets of dimension $\leq n$.

Consider a point configuration in $\R^n$ that consists of extreme points $a_0,\dots,a_n$, equivalently, the vertices of a $n$-dimensional polytope $P$, and inner points $x,y$ of $P$. Besides, choose $x,y$ so that $x$ belongs to only one of polytopes $P_i= ch(\{y\}\cup D\setminus a_i)$ and $y$ belongs to only one of polytopes $Q_j=ch(\{x\}\cup D\setminus a_j)$, where $D=\{a_0,\dots,a_n\}$, $i,j\leq n $.

Let $\D_n= \alg{Co}(\mathbb{R}^n,D\cup\{x,y\})$.

According to our assumption, $\{y\}\cup D\setminus a_i$ and $\{x\}\cup D\setminus a_j$ are not closed sets in convex geometry $D_n$, for some unique $i,j\leq n,i\not = j$.

Consider closure space $\C_n=(D\cup\{x,y\},\mathcal{D})$, where a family of closed sets
$\mathcal{D}$ is defined as a collection of all closed sets of convex geometry $\D_n$, plus sets
$\{y\}\cup D\setminus a_i$ and $\{x\}\cup D\setminus a_j$. These are, indeed, the closed sets of a closure operator, since the intersection of any members of $\mathcal{D}$ is again in $\mathcal{D}$. For this, it is enough to note that any subset of $\{y\}\cup D\setminus a_i$ and $\{x\}\cup D\setminus a_j$ is a closed set of convex geometry $\D_n$. We can claim more, namely:

\begin{lm}
$\C_n$ is a (finite) convex geometry that satisfies the $n$-Carath$\acute{\text{e}}$odory property.
\end{lm}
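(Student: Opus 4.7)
The first task is to show that $\mathcal{D}$ is the family of closed sets of a convex geometry on $D \cup \{x,y\}$. By the characterization recalled in the introduction, and since closure of $\mathcal{D}$ under intersection has already been verified in the excerpt, only the covering condition (3) remains to check. For any $X \in \mathcal{D}$ that is already closed in $\D_n$ and distinct from the full carrier, the convex-geometry structure of $\D_n$ supplies an element $a \notin X$ with $X \cup \{a\}$ closed in $\D_n$, and hence in $\C_n$. For the two additional closed sets, abbreviate $M_1 = \{y\} \cup D \setminus a_i$ and $M_2 = \{x\} \cup D \setminus a_j$; by the choice of $x,y$ in the construction, the $\D_n$-closure of $M_1$ is exactly $M_1 \cup \{x\}$ (since $x$ is the unique carrier point in $P_i$ outside $M_1$), so $M_1 \cup \{x\}$ belongs to $\mathcal{D}$, and analogously $M_2 \cup \{y\} \in \mathcal{D}$.

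For the $n$-Carath\'eodory property, the key observation I will prove is the closure-comparison identity: $\overline{S}^{\C_n} = \overline{S}^{\D_n}$ for every $S \neq M_1, M_2$. Indeed, from
\[ \overline{S}^{\C_n} \;=\; \overline{S}^{\D_n} \cap \bigcap\{M_k : S \subseteq M_k\}, \]
together with the fact (noted in the excerpt) that every proper subset of $M_k$ is already $\D_n$-closed, one sees that whenever $S \subsetneq M_k$ the intersection contributes nothing, while $\overline{M_k}^{\C_n}$ genuinely shrinks from $\overline{M_k}^{\D_n}$ by removing the single extra carrier point ($x$ for $M_1$, $y$ for $M_2$).

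With this identity in hand, let $p \in \overline{T}^{\C_n}$. If $T \in \{M_1,M_2\}$, then $\overline{T}^{\C_n} = T$, so $p \in T$ and any tuple from $T$ containing $p$ works. Otherwise $p \in \overline{T}^{\D_n}$, so Proposition 2.2 yields $a_0,\dots,a_n \in T$ with $p \in \overline{\{a_0,\dots,a_n\}}^{\D_n}$; if this tuple is distinct from $M_1$ and $M_2$, the identity above concludes the argument. The delicate case, which I expect to be the main obstacle, is when (say) $\{a_0,\dots,a_n\}=M_1$ while $p = x \notin M_1$. Then $T \supseteq M_1$; moreover $T \neq M_1$, since otherwise $\overline{T}^{\C_n}=M_1$ would not contain $x$; hence $T$ meets $\{x,a_i\}$. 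If $x \in T$, take $a_0 = x$ and the tuple closes on $x$ at once. If only $a_i \in T$, then the full vertex set $D$ is contained in $T$; since $D \neq M_1, M_2$, the identity gives $\overline{D}^{\C_n} = \overline{D}^{\D_n} = D \cup \{x,y\} \ni p$, so $D$ is the desired tuple. The symmetric case with $M_2$ and $p = y$ is handled identically, completing the verification.
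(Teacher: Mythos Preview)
Your argument is correct. For the convex-geometry part you follow exactly the paper's route: closure under intersection is already granted, and the covering property is checked separately for the $\D_n$-closed sets and for the two new sets $M_1,M_2$, using that $M_1\cup\{x\}$ and $M_2\cup\{y\}$ are $\D_n$-closed.

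Where you differ is the $n$-Carath\'eodory property: the paper's own proof in fact \emph{omits} this part entirely, so your treatment goes further. Your closure-comparison identity $\overline{S}^{\C_n}=\overline{S}^{\D_n}$ for $S\neq M_1,M_2$ is correct (it follows from the fact, recorded in the construction, that every proper subset of $M_k$ is already $\D_n$-closed), and your handling of the exceptional case---when the $(n{+}1)$-tuple produced by $\D_n$-Carath\'eodory happens to equal $M_1$ (or $M_2$) and the point in question is $x$ (resp.\ $y$)---is clean: either $x\in T$ and one finishes trivially, or $a_i\in T$ and then $D\subseteq T$ serves as the replacement tuple since $\overline{D}^{\C_n}=\overline{D}^{\D_n}=D\cup\{x,y\}$. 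One small remark: the paper's wording of the $n$-Carath\'eodory property appends ``$a_0$ can be taken to be any pre-specified element of $S$''; your argument establishes the standard version but not this stronger clause, and indeed the stronger clause does not hold in $\C_n$ (take $T=\{y\}\cup D$, $p=x$, and try to pre-specify $a_0=y$). This is a wrinkle in the paper's phrasing rather than a defect in your proof.
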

\begin{proof}
To show that $\C_n$ is a convex geometry, one needs to demonstrate that every closed set can be extended by one point to obtain another closed set. This is true for any closed set of $\D_n$, since it is a convex geometry itself. This is also true for additional sets $\{y\}\cup D\setminus a_i$ and $\{x\}\cup D\setminus a_j$: the first can be extended by $x$ to obtain $\{x, y\} \cup D\setminus a_i$, a closed set of $\D_n$, the second can be extended by $y$ to obtain $\{x,y\} \cup D\setminus a_j$, another closed set of $\D_n$.
\end{proof}

\begin{lm}
$\C_n$ cannot be a sub-geometry of any geometry of relatively convex sets of dimension $\leq n$.
\end{lm}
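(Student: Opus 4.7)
The plan is to show that $\C_n$ itself fails the $n$-Carousel Rule; the lemma then follows by contraposition via Lemmas~\ref{Co with Rule} and~\ref{Sub}. Indeed, if $\C_n$ were a finite sub-geometry of some $\Co$ with $A\subseteq\R^n$, then Lemma~\ref{Co with Rule} would give the $n$-Carousel Rule on the ambient geometry, and Lemma~\ref{Sub} would transport it to $\C_n$, contradicting what we are about to establish.

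The witness for the failure is $S=D=\{a_0,\ldots,a_n\}$ together with the interior points $x,y$ of $ch(D)$. First I would verify that $x$ and $y$ lie in the $\C_n$-closure of $D$: the $\D_n$-closure of $D$ is all of $D\cup\{x,y\}$, and since each of the two extra closed sets added in forming $\C_n$ omits some $a_k$, neither can serve as a smaller $\C_n$-closed set containing $D$. Next, for each $n$-tuple $(a_{k_1},\ldots,a_{k_n})$ from $S$, I would check that $x$ does not lie in the $\C_n$-closure of $\{y,a_{k_1},\ldots,a_{k_n}\}$. Letting $T=\{a_{k_1},\ldots,a_{k_n}\}$, the critical case is $|T|=n$, so that $T=D\setminus a_\ell$ for some $\ell$.

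If $\ell=i$, then $\{y\}\cup T=\{y\}\cup D\setminus a_i$ is itself one of the two extra sets adjoined by hand to $\mathcal{D}$, so it is its own closure in $\C_n$ and visibly does not contain $x$. If $\ell\ne i$, the uniqueness clause used in choosing $x,y$ gives $x\notin P_\ell=ch(\{y\}\cup D\setminus a_\ell)$; combined with the fact that $D$ spans a non-degenerate $n$-simplex and $y$ is an interior point, so that no $a_m$ lies in $P_\ell$ either, the set $\{y\}\cup D\setminus a_\ell$ is already $\D_n$-closed, hence $\C_n$-closed, and again excludes $x$. When $|T|<n$, one picks any $\ell\ne i$ with $T\subseteq D\setminus a_\ell$ (possible because there are $n+1$ candidates for $\ell$); then $\{y\}\cup T\subseteq\{y\}\cup D\setminus a_\ell$, and since $\C_n$-closures are contained in $\D_n$-closures (because $\C_n$ has more closed sets than $\D_n$), the conclusion follows from the same reasoning.

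The main technical nuance is spelling out the general-position hypothesis on $x,y$ and $D$ already tacit in the construction of $\D_n$, ensuring that for every $\ell\ne i$ the set $\{y\}\cup D\setminus a_\ell$ is genuinely $\D_n$-closed (i.e., that neither $x$ nor any $a_m$ lies in $P_\ell$). Once this is secured, the failure of the $n$-Carousel Rule on $\C_n$ reduces to the short case analysis above, and the lemma drops out by the contraposition in the opening paragraph.
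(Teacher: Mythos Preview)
Your proposal is correct and follows exactly the same strategy as the paper: show that $\C_n$ fails the $n$-Carousel Rule (witnessed by $x,y\in\overline{D}$ but $x\notin\overline{\{y\}\cup D\setminus a_\ell}$ for every $\ell$), then invoke Lemmas~\ref{Co with Rule} and~\ref{Sub} by contraposition. The paper's own proof is a single sentence asserting the failure of the Carousel Rule without spelling out the case analysis, so your version simply supplies the details the paper leaves implicit.
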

\begin{proof} Indeed, $\C_n$ does not satisfy the $n$-Carousel Rule, since $x$ is not in a closure of $y$ with any $n$ points from $D$ (similarly, $y$ is not in a closure of $x$ with any $n$ points from $D$). Hence, the claim of this lemma follows from \ref{Co with Rule} and \ref{Sub}. 
\end{proof}

On the other hand, we can show that $\C_n$ is a sub-geometry of some $(n+1)$-dimensional geometry of relatively convex sets.
Indeed, consider $\R^{n+1}$, and subspace $S_0 \subseteq \R^{n+1}$ of all points whose last projection is $0$; correspondingly, let $S_1 \subseteq \R^{n+1}$ be a subspace of all points whose last projection is $1$. Consider points $c_0,c_1,\dots, c_n \in S_0$ whose convex hull is $n$-dimensional polytope $C$, and take an inner point $u$ of $C$. Let $b_0,b_1,\dots,b_n,v \in S_1 $ be obtained from $c_0,c_1,\dots, c_n, u$, correspondingly, by replacing the last projection by $1$. Let $K=\{c_0,b_0,c_1,b_1,\dots,c_n,b_n,u,v\}$ and $\G_{n+1}=\alg{Co}(\mathbb{R}^{n+1},K)$.

Define a mapping $\phi$ from closed sets of $\C_n$ to closed sets of $\G_{n+1}$:
$\phi(\{a_i\})=\{c_i,b_i\}$, $i=0,\dots,n$, $\phi(\{x\})=\{u\}$, $\phi(\{y\})=\{v\}$.
For any closed set $S=\{s_1,\dots,s_k\}$, $k>1$, of $\C_n$, it is straightforward to check that $\phi(s_1)\cup\dots\cup \phi(s_k)$ is closed in $\G_{n+1}$, thus, we may define $\phi(S)= \phi(s_1)\cup\dots\cup \phi(s_k)$, for any closed $S$ in $\C_n$. Evidently, this mapping preserves intersections. As for the closure of a union of closed sets $X,Y$ in $\C_n$, we observe that

$\overline{X\cup Y} = \left\{
\begin{array}{c l}
  X\cup Y \cup \{x,y\}, & a_0,\dots,a_n\in X\cup Y; \\
  \\
  X\cup Y, & \mbox{ otherwise}. \\
\end{array}
\right.
$

Similarly, in $\G_{n+1}$, $u$ (symmetrically, $v$) is not in a closure of $v$ ($u$) with any $n$ sets $\{c_i,b_i\}$, $i=0,\dots,n$, since $u$ ($v$) is an inner point of $n$-dimensional polytope with vertices $c_0,\dots,c_n$ ($b_0,\dots,b_n$). Hence, we have in $\G_{n+1}$\\
 
$\overline{\phi(X)\cup \phi(Y)} = \left\{
\begin{array}{c l}
  \phi(X)\cup \phi(Y) \cup \{u,v\}, & a_0,\dots,a_n\in X\cup Y; \\
  \\
  \phi(X)\cup \phi(Y), & \mbox{ otherwise}. \\
\end{array}
\right.
$

Therefore, $\phi$ preserves the closure of the union of closed sets, too.

\section{Sharpening Carusel Rule}

It turns our that one can slightly strengthen the $n$-Carusel Rule, and we are going to illustrate it in case of $2$-Carusel Rule.

\begin{figure}[ht]
\begin{center}
\includegraphics[scale=.45]{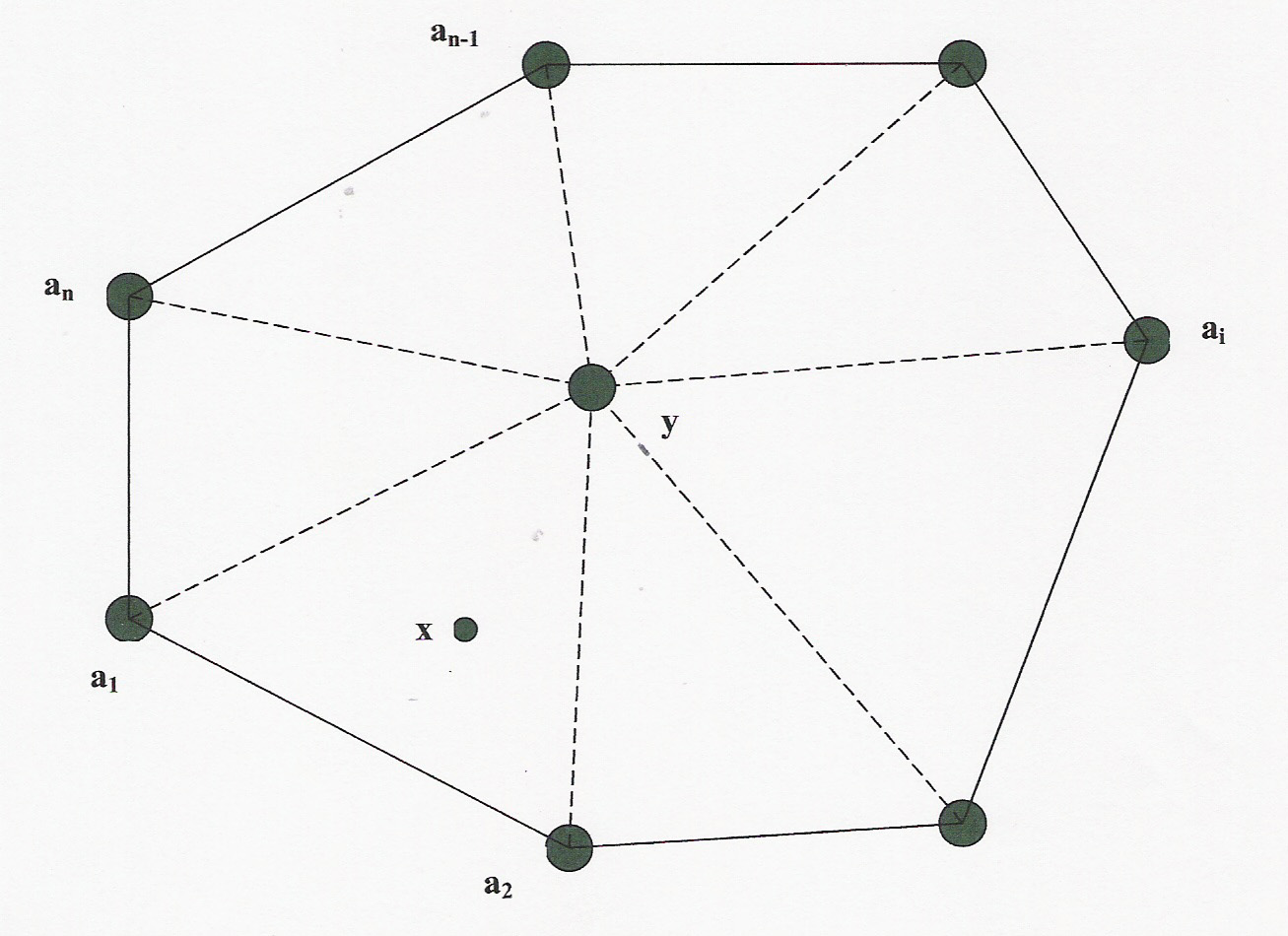}
\caption{}
\end{center}
\end{figure}

First of all, let see the visual image of $2$-Carusel Rule on Figure 1: if $x,y$ are in the convex polygone generated by $a_1,\dots,a_n$, then $x$ should be at least in one triangle generated by $y$ and two points from $a_1,\dots,a_n$. In general, there might be multiple triangles of that sort containing $x$. On the other hand, if $n=3$, i.e. $x,y$ are inside the triangle defined by $a_1,a_2,a_3$, $x$ can belong to maximum two triangles. In this case, $x$ will be also on the segment containing $y$ and one of points $a_1,a_2,a_3$. Indeed, if, say, $x \in \overline{\{y,a_1,a_2\}}$ and $x \in \overline{\{y,a_1,a_3\}}$, then $x \in \overline{\{y,a_1\}}$. Note that the property will hold even if $y$ belongs to the boundary of triangle $a_1,a_2,a_3$.

The version of this property under additional assumption that the points on the plane are in the \emph{general position}, i.e. no three of them are on the same line, is called \emph{the simplex partition property} in \cite{MoSo}. In this case, one would say that $x$ can be in exactly one of triangles $\overline{\{y,a_i,a_j\}}$,
$i,j \in \{1,2,3\}$.

 It turns out we can make the similar statement in any sub-geometry of $2$-dimensional geometry, as long as we assume that $y$ is not on the boundary of $a_1,a_2,a_3$. 

\begin{thm}\label{sharp}
Let $\G=(G,-)$ be any sub-geometry of $2$-dimensional finite geometry $\G_0=\alg{Co}(\mathbb{R}^2,G_0)$. Then the following implication holds for all $x,y,a,b,c$ in $G$: if $y \in \overline{\{a,b,c\}}$, $\overline{y}\cap \overline{\{a,b\}}= \overline{y}\cap\overline{\{b,c\}}= \overline{y}\cap\overline{\{a,c\}}=\overline{y}\cap \overline{x}=\emptyset$, $x\in \overline{\{y,a,b\}}$ and $x \in \overline{\{y,a,c\}}$, then $\overline{x} \cap \overline{\{y,a\}}>\emptyset$. 
\end{thm}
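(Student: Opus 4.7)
The plan is to transfer the configuration across the sub-geometry embedding $\phi$ into the ambient planar geometry $\G_0=\alg{Co}(\R^2,G_0)$, work there with concrete planar tools, and pull the conclusion back. Put $X'=\phi(\overline x)$, $Y'=\phi(\overline y)$, $A'=\phi(\overline a)$, $B'=\phi(\overline b)$, $C'=\phi(\overline c)$ and let $\tau$ denote the closure of $\G_0$. Since $\phi$ preserves intersections and closures of unions, the hypotheses translate into $Y'\subseteq \tau(A'\cup B'\cup C')$, $X'\subseteq \tau(Y'\cup A'\cup B')\cap \tau(Y'\cup A'\cup C')$, and $Y'\cap\tau(A'\cup B')=Y'\cap\tau(B'\cup C')=Y'\cap\tau(A'\cup C')=X'\cap Y'=\emptyset$. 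It is enough to produce some $x'\in X'$ with $x'\in ch(Y'\cup A')$: such $x'$ lies in $\tau(Y'\cup A')=\phi(\overline{\{y,a\}})$, and injectivity of $\phi$ then yields $\overline x\cap\overline{\{y,a\}}\ne\emptyset$ in $\G$.

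Fix any $y'\in Y'$. By planar Carath\'eodory, $y'\in ch\{p_1,p_2,p_3\}$ with $p_i\in A'\cup B'\cup C'$, and the three edge-avoidance hypotheses rule out every case in which two $p_i$'s come from the same set, so one obtains $a'\in A'$, $b'\in B'$, $c'\in C'$ with $y'$ lying strictly in the interior of triangle $a'b'c'$. This is where the hypothesis ``$y'$ avoids every edge'' is used in full strength and plays the role of the simplex partition property: because $y'$ is strictly interior, the line through $y'$ and $a'$ strictly separates $b'$ from $c'$, and the two closed triangles $ch\{y',a',b'\}$ and $ch\{y',a',c'\}$ meet exactly in the segment $[y',a']$. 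Now pick any $x'\in X'$ and apply the $2$-Carousel Rule (Lemma~\ref{Co with Rule}) in $\G_0$ twice: once with $x',y'\in\tau(Y'\cup A'\cup B')$ and once with $x',y'\in\tau(Y'\cup A'\cup C')$. This produces $t_1,t_2\in Y'\cup A'\cup B'$ and $u_1,u_2\in Y'\cup A'\cup C'$ with $x'\in ch\{y',t_1,t_2\}$ and $x'\in ch\{y',u_1,u_2\}$. If $\{t_1,t_2\}\subseteq Y'\cup A'$, then $x'\in ch(Y'\cup A')$ and we are done.

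The crux is the remaining case, in which some $t_i\in B'\setminus(Y'\cup A')$ and symmetrically some $u_j\in C'\setminus(Y'\cup A')$. My intended route is by contradiction: assume $x'\notin ch(Y'\cup A')$ and separate $x'$ from $ch(Y'\cup A')$ by a line $\ell$; then $x'\in ch(Y'\cup A'\cup B')$ forces a point of $B'$ strictly on the $x'$-side of $\ell$, and $x'\in ch(Y'\cup A'\cup C')$ forces a point of $C'$ there too, while $Y'\cup A'$ lies on the opposite closed side. Combining this with the rigid triangle geometry above should force one of the Carath\'eodory coefficients in the expression $y'=\alpha a'+\beta b'+\gamma c'$ to vanish, which would place $y'$ into the hull of only two of $A',B',C'$ and contradict an edge-avoidance hypothesis. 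The principal obstacle is exactly the linking step: the Carousel witnesses $t_i,u_j$ are not a priori the chosen vertices $a',b',c'$, and the separating line $\ell$ is not a priori the line through $y'$ and $a'$. I expect closing this gap will require an additional replacement argument---either re-expressing each witness via planar Carath\'eodory as a convex combination tied to $a',b',c'$, or choosing $a',b',c'$ adaptively with respect to $\ell$. Once $x'\in ch(Y'\cup A')$ is in hand, the pull-back $x'\in\phi(\overline x)\cap\phi(\overline{\{y,a\}})=\phi(\overline x\cap\overline{\{y,a\}})$ through $\phi$ completes the proof.
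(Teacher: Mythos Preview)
Your proposal is explicitly incomplete, and you say so yourself: the ``crux'' case where the Carousel witnesses $t_i,u_j$ fall into $B'$ and $C'$ is never handled, and the separating-line idea you sketch does not connect to the triangle $a'b'c'$ you built around $y'$. There is no mechanism by which the existence of \emph{some} $b''\in B'$ and \emph{some} $c''\in C'$ on the $x'$-side of $\ell$ would force a coefficient in $y'=\alpha a'+\beta b'+\gamma c'$ to vanish, since $b'',c''$ need have nothing to do with $b',c'$. The underlying problem is structural: a single Carath\'eodory triangle for one chosen $y'$ gives you no control over the Carousel witnesses, which may range over all of the (possibly large) sets $Y',A',B',C'$. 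The pointwise approach cannot close.

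The paper proceeds globally. From the hypothesis that $Y\setminus P$ lies in $\overline{A\cup B\cup C}$ but meets none of the pairwise hulls, Lemma~\ref{abc} forces the vertices of the polygon $\overline{A\cup B\cup C}$ to appear in three contiguous arcs, one for each of $A,B,C$; this cluster structure is the key geometric input you are missing. The argument then studies the two polygons $\overline{Y\cup A\cup B}$ and $\overline{Y\cup A\cup C}$, locates the first $Y$-vertices $y_1,y_2$ past the $A$-arc on each boundary, proves these boundary paths must cross at some point $O$ (Claim~2), and does a three-case analysis on the relative positions of $y_1,y_2,O$ to show the intersection polygon $V=\overline{Y\cup A\cup B}\cap\overline{Y\cup A\cup C}$ either lies entirely in $\overline{Y\cup A}$ or has its ``bad'' part already covered by $\overline{y_1\cup y_2\cup B\cup C}$. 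For the latter case the paper needs the additional containment $Y\subseteq\overline{X\cup B\cup C}$, obtained by applying the $2$-Carousel Rule in $\G$ first; you never derive or use this. Two smaller points: $\phi(\emptyset)$ is not $\emptyset$ in general but some closed $P\subseteq G_0$, so your transferred intersections should read $=P$, not $=\emptyset$, and $y'$ must be chosen in $Y'\setminus P$; and your use of the Carousel Rule with the base set $Y'\cup A'\cup B'$ allows $t_i\in Y'$, a sub-case you do not discuss.
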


To prove Theorem we will need a few auxiliary statements.

\begin{lm}\label{s in 3} Let $a_1,\dots,a_i,\dots,a_j,\dots,a_k,\dots,a_s,\dots,a_n$ be a circular order of vertices of some convex polygon on the plane. If $s$ is a point of intersection of segments $[a_1,a_j]$ and $[a_i,a_k]$, then $s$ is in triangle $\overline{\{a_s,a_i,a_j\}}$.
\end{lm}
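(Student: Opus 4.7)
My plan is to denote the intersection point by $p$ (to avoid the clash with the index $s$) and to show that $p$ lies on the correct side of each of the three lines bounding the triangle $\overline{\{a_s,a_i,a_j\}}$. Since that (possibly degenerate) triangle is the intersection of the three closed half-planes determined by its sides, each containing the opposite vertex, verifying the three half-plane conditions suffices.

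The tool I will apply repeatedly is elementary: for a convex polygon with vertices $a_1,\dots,a_n$ in circular order, any chord $[a_u,a_v]$ with $u<v$ splits the remaining vertices into two arcs---those with indices in $(u,v)$ on one side of the line through $a_u,a_v$, and those with indices outside $(u,v)$ on the opposite side. Consequently, whenever a segment has both endpoints in one closed half-plane determined by this line, the whole segment is contained in that half-plane. Since $p\in[a_1,a_j]\cap[a_i,a_k]$, I can feed either of these two segments into this principle when checking a given side.

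Each of the three checks then reduces to a comparison of indices along the chain $1<i<j<k<s$. For the line $a_i a_j$, both $1$ and $s$ lie outside the interval $(i,j)$, so $a_1$ and $a_s$ lie on the same side of this line; since $p\in[a_1,a_j]$ and $a_j$ is on the line itself, $p$ lies on the closed $a_s$-side. For the line $a_i a_s$, both $j$ and $k$ lie in $(i,s)$, so $a_j$ and $a_k$ lie on the same side; since $p\in[a_i,a_k]$ and $a_i$ is on the line, $p$ lies on the closed $a_j$-side. For the line $a_j a_s$, both $1$ and $i$ lie outside $(j,s)$, so $a_1$ and $a_i$ lie on the same side; since $p\in[a_1,a_j]$ and $a_j$ is on the line, $p$ lies on the closed $a_i$-side. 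Combining the three conclusions places $p$ in $\overline{\{a_s,a_i,a_j\}}$.

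I do not anticipate any real obstacle; the argument is pure bookkeeping of which half-plane contains which vertex, and the fact that each auxiliary segment always has one endpoint on the bounding line under consideration ensures that the inclusions are in the closed half-planes (so the statement holds even in the degenerate case where the three points $a_s,a_i,a_j$ happen to be collinear).
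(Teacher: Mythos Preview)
Your proof is correct and follows essentially the same approach as the paper: both verify that the intersection point lies on the correct side of each of the three lines bounding the triangle $\overline{\{a_s,a_i,a_j\}}$, using the fact that a chord of a convex polygon separates the remaining vertices according to their position in the circular order. Your bookkeeping is in fact tidier than the paper's (which contains a couple of evident typos), and your choice to rename the intersection point $p$ avoids the unfortunate clash with the vertex index $s$.
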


\begin{figure}[ht]
\begin{center}
\includegraphics[scale=.45]{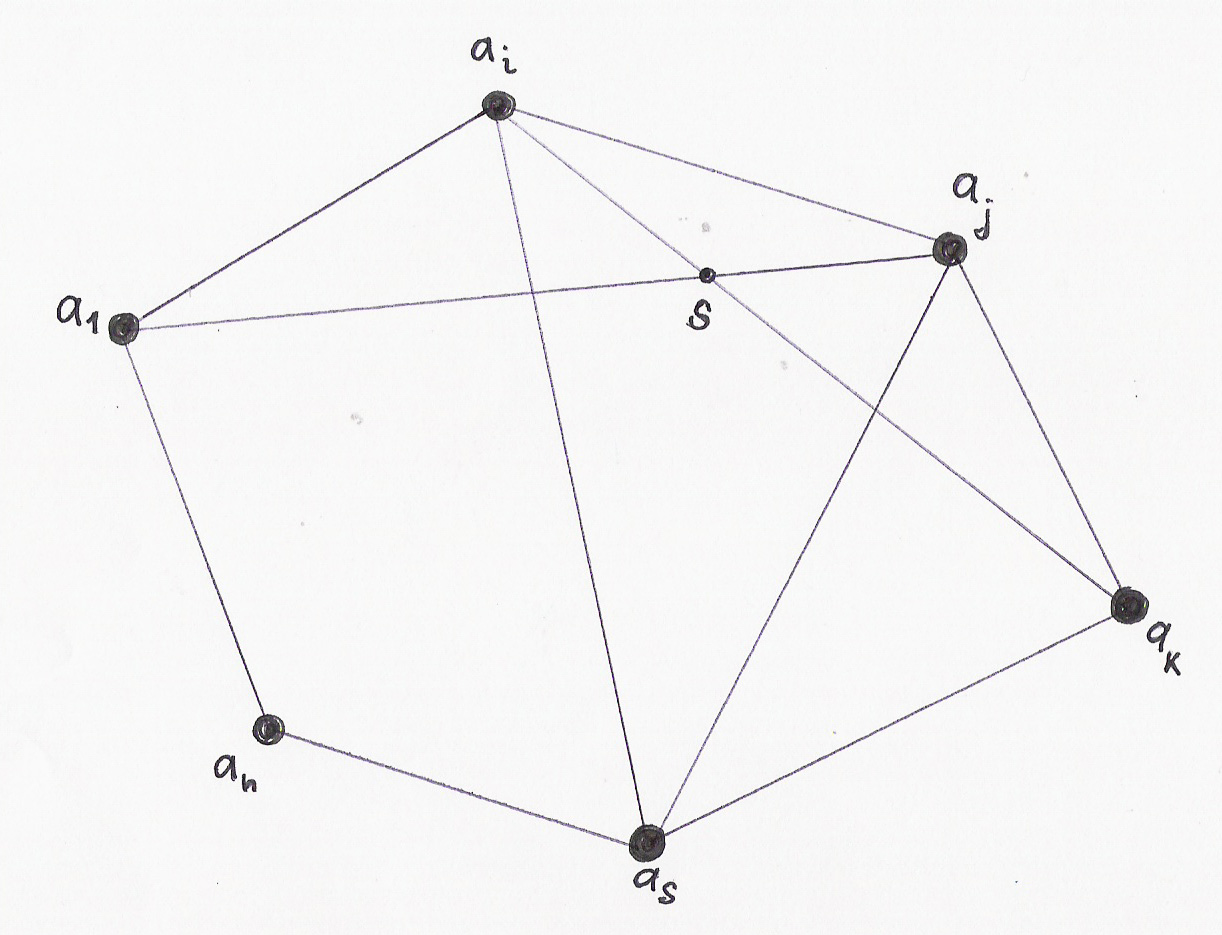}
\caption{}
\label{s}
\end{center}
\end{figure}

\begin{proof}
It is true for any ''diagonal'' of a convex polygon $[a_1,a_j]$ that all the vertices between $a_j$ and $a_1$ in their circular order belong to the same semi-plane generated by the line $(a_1,a_j)$. In particular, $[a_1,a_j]$ and $[a_i,a_k]$, indeed, intersect at some point $s$, since the points $a_i$ and $a_k$ are separated by line $(a_1,a_j)$.

In order to show that $s$ is inside triangle $\overline{\{a_s,a_i,a_j\}}$, one needs to show that, for each side of a triangle, the third vertex and point $s$ belong to the same semiplane generated by the line extending this side.

Take side $[a_i,a_j]$, then vertices $a_k,a_s,a_1$ are in the same semiplane generated by line $(a_i,a_j)$, hence, both segments $[a_1,a_k]$ and $[a_i,a_j]$ are in that semiplane, implying that their intersection point $s$ belongs there as well.

Take another side of triangle $[a_i,a_s]$. Then $a_j,a_k$ are in the same semiplane generated by line $(a_i,a_j)$. Since $s$ is on segment $[a_i,a_k]$, it belongs to the same semiplane. Thus, $s$ and $a_j$ belong to the same semiplane generated by $(a_i,a_s)$, which is needed. Similar is true for the side $[a_j,a_s]$ and points $a_i$ and $s$. 
\end{proof}

\begin{lm}\label{abc}
Suppose the vertices of a convex polygon $M$ with at least 4 vertices are split into three subsets $A,B,C$. If the vertices of one of these subsets are separated by the vertices of the others in the circular order, then every point of convex polygon $M$ belongs to $\overline{A\cup B}\cup\overline{A\cup C}\cup\overline{B\cup C}$.
\end{lm}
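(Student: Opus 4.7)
My plan is to prove the contrapositive: suppose $p\in M$ lies outside all three hulls $\overline{A\cup B}$, $\overline{A\cup C}$, $\overline{B\cup C}$, and deduce that each of $A$, $B$, $C$ must form a single contiguous arc in the circular vertex order on $M$, contradicting the separation hypothesis. Boundary points of $M$ are handled immediately: such a $p$ is a convex combination of at most two vertices of $M$, which by pigeonhole lie in two of the three sets, so $p$ already belongs to one of the three hulls. Thus the substantive case is $p$ in the interior of $M$.

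For interior $p$, strict separation from $A\cup B$ produces a line which, translated parallel to itself through $p$, becomes a line $\ell_{AB}$ with $A\cup B$ strictly on one side (call it the ``pro-$AB$'' side); construct $\ell_{AC}$ and $\ell_{BC}$ analogously. These three lines are pairwise distinct, since any coincidence would yield a single line separating $A\cup B\cup C$ from $p$, contradicting $p\in M$. Because $p$ is interior to $M$, each con-halfplane must contain a vertex, and the only eligible vertex type forces witnesses $a^*\in A$ on the con-side of $\ell_{BC}$, $b^*\in B$ on the con-side of $\ell_{AC}$, and $c^*\in C$ on the con-side of $\ell_{AB}$. Label each of the six angular sectors around $p$ by the sign triple $(s_1,s_2,s_3)\in\{+,-\}^3$ recording pro/con of $\ell_{AB}, \ell_{AC}, \ell_{BC}$; the three witnesses then occupy the sectors $(+,+,-)$, $(+,-,+)$, and $(-,+,+)$.

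The key technical step, which I anticipate as the main obstacle, is pinning down which of the eight possible sign triples are realized. Going around $p$ once, each line is crossed exactly twice and the crossings occur in the cyclic order $\ell_{AB}, \ell_{AC}, \ell_{BC}, \ell_{AB}, \ell_{AC}, \ell_{BC}$ (up to relabeling by the angular order of the lines); a direct computation starting from any initial label $(x_1,x_2,x_3)$ shows that the six sectors form a 6-cycle in the Boolean 3-cube missing exactly the antipodal pair $\{(x_1,-x_2,x_3),(-x_1,x_2,-x_3)\}$. Among the four antipodal pairs in $\{+,-\}^3$, only $\{(+,+,+),(-,-,-)\}$ is disjoint from the three witness labels $(+,+,-),(+,-,+),(-,+,+)$, so this must be the missing pair. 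In particular $(+,+,+)$ is not realized, so every $A$-vertex (which requires $s_1=s_2=+$) is confined to $(+,+,-)$, every $B$-vertex to $(+,-,+)$, and every $C$-vertex to $(-,+,+)$.

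These three sectors are pairwise disjoint angular arcs around $p$, and since $p$ lies in the interior of the convex polygon $M$ the angular order of vertices viewed from $p$ coincides with the circular vertex order on $M$. Consequently the vertices of $A$, of $B$, and of $C$ each occupy a single contiguous arc of $M$, contradicting the hypothesis that one of these subsets is separated and completing the proof.
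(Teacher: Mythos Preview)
Your proof is correct and takes a genuinely different route from the paper's.

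The paper argues directly. Assuming $a_1,a_2\in A$ are separated in the circular order, it slices $M$ by the chord $[a_1,a_2]$ and analyses what remains uncovered by $\overline{A\cup B}\cup\overline{A\cup C}$: these leftovers are small triangles $\overline{\{b_i,c_j,s_{ij}\}}$ at consecutive $B$--$C$ transitions, and the auxiliary diagonal-intersection Lemma~\ref{s in 3} is invoked to place each such triangle inside $\overline{B\cup C}$. So the paper's argument is constructive (it tells you which of the three hulls catches each point) but depends on that preparatory lemma.

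Your argument is the contrapositive: from an interior point $p$ escaping all three hulls you manufacture three concurrent separating lines through $p$, and the pleasant combinatorics of the six resulting sectors---the realized sign triples form a $6$-cycle in the Boolean $3$-cube missing one antipodal pair, and the only pair compatible with the three forced witnesses is $\{(+,+,+),(-,-,-)\}$---pins each of $A,B,C$ into a single open sector. Since angular order from an interior point agrees with the polygon's cyclic vertex order, this makes each class a contiguous arc, contradicting the separation hypothesis. This is self-contained (no need for Lemma~\ref{s in 3}), and the sector/parity argument is a clean device that would generalize more readily than the paper's explicit triangle-chasing. Two small points worth making explicit in a final write-up: the trivial case where one of $A,B,C$ is empty (needed to guarantee the three lines are pairwise distinct), and the observation that no $A$-vertex can lie on $\ell_{BC}$ because the two rays of $\ell_{BC}$ border only sectors with $(s_1,s_2)\in\{(+,-),(-,+)\}$ once $(+,+,+)$ is excluded.
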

\begin{figure}[ht]
\begin{center}
\includegraphics[scale=.45]{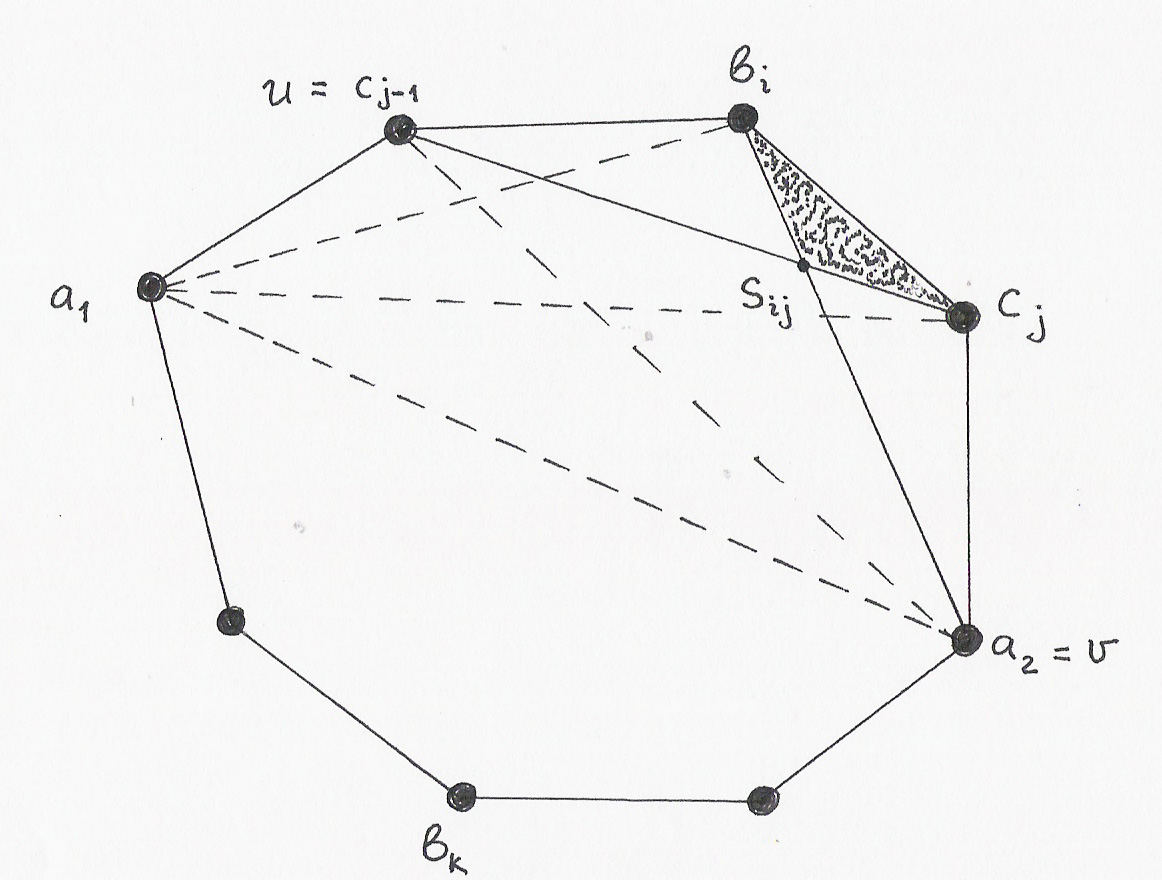}
\caption{}
\label{ABC}
\end{center}
\end{figure}

\begin{proof}
Assume without loss of generality that vertices $a_1,a_2 \in A$ are separated by points either from $B$ or $C$ in the circular order of vertices of polygon $M$.
If points from $B$ all belong to one semi-plane generated by line $(a_1,a_2)$, and all points from $C$ are in the other semi-plane, then every point from $M$ is in
$\overline{A\cup B}\cup\overline{A\cup C}$. Thus, assume that there are points from both $B$ and $C$ in one of semi-planes, and points from, say, $B$ are located in both semi-planes. Then the only points of $M$ that do not belong to
$\overline{A\cup B}\cup\overline{A\cup C}$ are the points of triangles of the form $\overline{\{b_i,c_j,s_{ij}\}}$, where $b_i \in B$, $c_j\in C$, $c_j$ immediately follows $b_i$ in the circular order of vertices of $M$, and $s_{ij}$ is the point of intersection of lines $(u,c_j)$, and $(b_i,v)$, where $u$ is  closest from $A\cup C$ preceding point
to $b_i$ in the circular order, and $v$ is the closest in circular order point from $A\cup B$ following $c_j$. 

According to the assumption, there is vertex $b_k \in B$ that belongs to the other semi-plane generated by $(a_1,a_2)$. Due to Lemma \ref{s in 3}, when $a_i$ is replaced by $b_i$, $a_1$ by $u$, $a_j$ by $c_j$, $a_k$ by $v$, $s$ by $s_{ij}$ and $a_s$ by $b_k$, it follows that $s_{ij}$ belongs to triangle $\overline{\{b_k,b_i,c_j\}}$. In particular, 
$\overline{\{b_i,c_j,s_{ij}\}}\subseteq \overline{\{b_k,b_i,c_j\}}\subseteq B\cup C$.  
\end{proof}

\emph{Proof of Theorem \ref{sharp}}.
Due to Lemma \ref{Sub}, $\G$ satisfies $2$-Carousel Rule, therefore, $y \in \overline{\{x,b,c\}}$. According to assumption that $\G$ is a subgeometry of $\G_0$, one can find an embedding $\phi$ of lattice of closed sets of $\G$ into lattice of closed sets of $\G_0$. Denote $U=\phi(\overline{u})$, for any $u \in \{\overline{a},\overline{b},\overline{c},\overline{x},\overline{y}\}$, and let $P=\phi(\emptyset)$.
Then, according to conditions of theorem,
$X,Y \subseteq \overline{A\cup B\cup C}$, $X \subseteq \overline{A\cup B\cup Y}$, $X\subseteq \overline{A \cup C\cup Y}$, $Y \subseteq  \overline{B \cup C \cup X}$. Besides, $P= Y \cap \overline{A\cup B} =Y \cap \overline{A\cup C}=Y\cap \overline{B\cup C}=Y\cap X$. 

Since points of $Y\setminus P$ are inside of convex polygon $\overline{A\cup B \cup C}$, but not in any $\overline{A\cup B},\overline{A\cup C},\overline{B\cup C}$, the vertices of $\overline{A\cup B \cup C}$ should appear in clusters, due to Lemma \ref{abc}: elements from $A$ should follow elements from $C$, which should follow elements
of $B$, in their circular order. Figure \ref{4} makes a sketch of arrangement, where $a_1$ and $a_2$ are end points of $A$-cluster, similarily, $b_1,b_2$ and $c_1,c_2$ are end-points of clusters $B$ and $C$, correspondingly. Elements of $Y\setminus P$  are located inside triangle formed by points of intersection of lines $(a_1,b_2)$, $(b_1,c_2)$ and $(c_1,a_2)$. We need to show that some point $x \in X\setminus P$ is in $\overline{Y\cup A}$.

\begin{figure}[ht]
\begin{center}
\includegraphics[scale=.45]{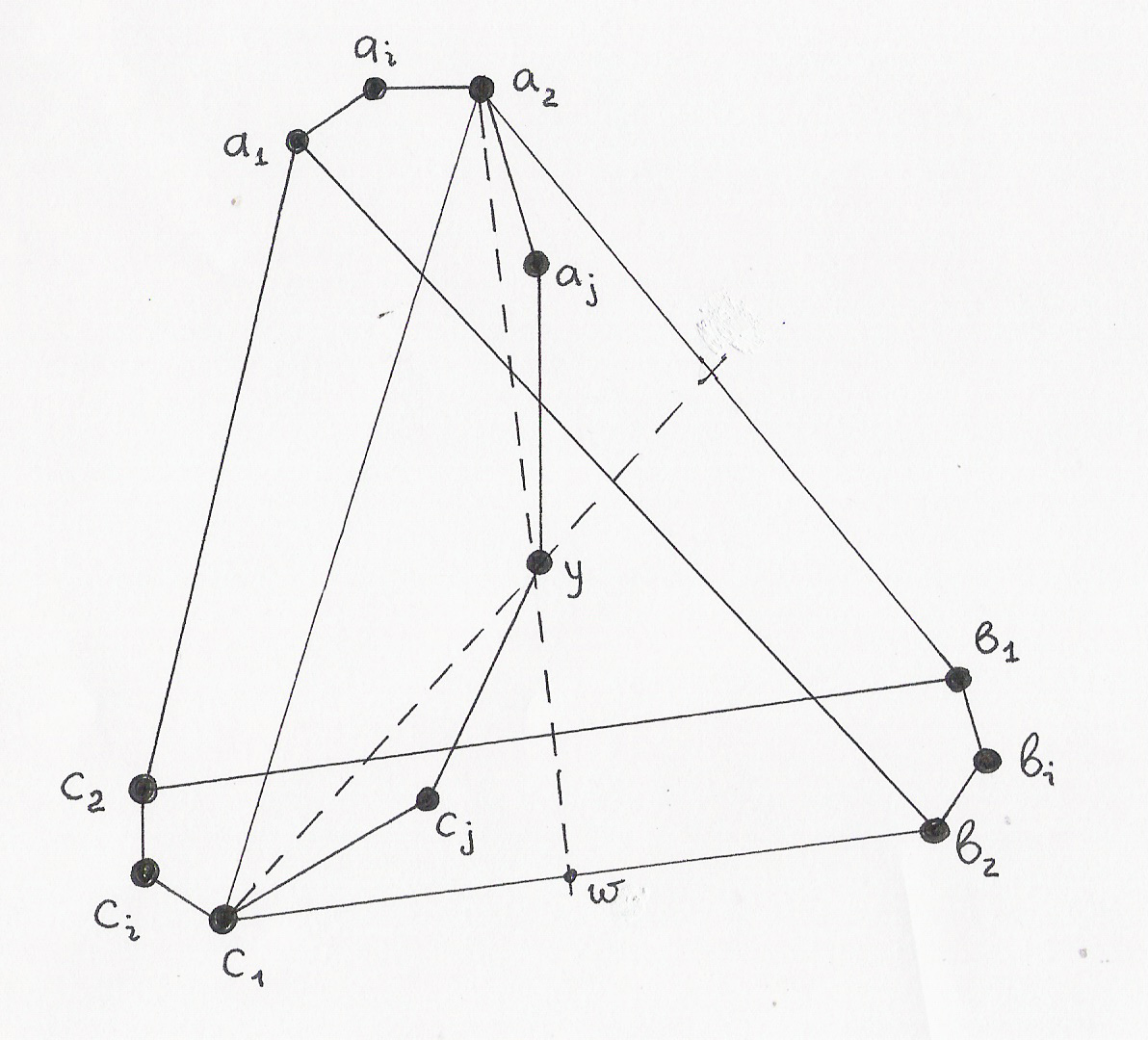}
\caption{}
\label{4}
\end{center}
\end{figure}

\begin{claim} Let points of $\overline{A\cup C\cup Y}$ follow each other in the circular order\\
$c_1,\dots,c_i,\dots, c_2,a_1,\dots, a_i,\dots,a_2,\dots,u,\dots,v\dots,c_1$. Then $v \in \overline{u\cup B\cup C}$.
\end{claim}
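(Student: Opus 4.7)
The plan is to apply the $2$-Carousel Rule of Lemma \ref{Co with Rule} in the ambient geometry $\G_0$, together with the containment $Y \subseteq \overline{B \cup C \cup X}$ that is already available from the proof of Theorem \ref{sharp} (this comes from applying the $2$-Carousel Rule to $y\in\overline{\{x,b,c\}}$). Since $u,v \in Y \subseteq \overline{B \cup C \cup X}$, applying the $2$-Carousel Rule to the pair $(v,u)$ in the set $B \cup C \cup X$ yields $q_1, q_2 \in B \cup C \cup X$ with $v \in \overline{\{u, q_1, q_2\}}$. If both $q_i \in B \cup C$, the desired inclusion $v \in \overline{\{u\} \cup B \cup C}$ is immediate and we are done.

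The main obstacle is the remaining case, in which at least one of the $q_i$, say $q_1$, lies in $X$. To deal with it, I would exploit the hypothesis $X \subseteq \overline{A \cup C \cup Y}$ and reapply the $2$-Carousel Rule to $q_1$, with $u$ as the pre-specified base point, obtaining $q_1 \in \overline{\{u, r_1, r_2\}}$ for some $r_1, r_2 \in A \cup C \cup Y$. Substituting this back into the expression for $v$ gives a convex combination of $u$ and points in $A \cup B \cup C \cup X \cup Y$. Subcases in which every non-$u$ term lies in $B \cup C$ close out directly; $Y$-terms can be unfolded once more via $Y \subseteq \overline{B \cup C \cup X}$, and any surviving $X$-terms via $X \subseteq \overline{A \cup C \cup Y}$, so that the procedure is set up as a recursion.

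The step I expect to be hardest is ruling out persistent $A$-contributions and guaranteeing that this iteration terminates. For this, I would use the extremality of $v$ as a vertex of $\overline{A \cup C \cup Y}$, together with the emptiness conditions $v \notin \overline{A \cup B}$, $v \notin \overline{A \cup C}$, $v \notin \overline{B \cup C}$ (which hold because $v \in Y \setminus P$), to exclude any intermediate expression that would place $v$ inside a two-cluster hull such as $\overline{A \cup C}$ or $\overline{A \cup Y}$. The circular-order hypothesis, which locates $v$ between $u$ and $c_1$ on the $Y$-chain of $\overline{A \cup C \cup Y}$, together with Lemmas \ref{s in 3} and \ref{abc} controlling diagonals and clustered splits of the vertices of $\overline{A \cup B \cup C}$, should then force the chain of substitutions to collapse to a convex combination lying entirely in $\{u\} \cup B \cup C$, giving $v \in \overline{\{u\} \cup B \cup C}$ as required.
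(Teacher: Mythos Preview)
Your approach has a genuine gap at its very first step. You assume $u,v\in Y$ in order to invoke $Y\subseteq\overline{B\cup C\cup X}$, but the claim does not assert this: $u$ and $v$ are merely vertices of the convex polygon $\overline{A\cup C\cup Y}$ appearing in the arc between $a_2$ and $c_1$. In fact the paper later \emph{applies} Claim~1 with $u$ taken to be a point of $C$ (see case (I) just after the claim), so the restriction $u,v\in Y$ would make the claim useless for its intended purpose.

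Even granting $u,v\in Y$, the recursion you set up has no termination mechanism. Each application of the $2$-Carousel Rule trades one ``bad'' point (from $X$ or $A$) for up to two new ones, and the hand-wave about extremality of $v$ together with Lemmas~\ref{s in 3} and~\ref{abc} does not supply an invariant that decreases. You yourself flag this as the hardest step, and indeed it is not carried out. The paper avoids all of this: it never touches $X$ or the Carousel Rule. Instead it gives a short half-plane argument entirely inside $\overline{A\cup B\cup C}$: the lines $(u,a_2)$ and $(c_1,u)$ confine $v$ to a region which, together with the observation that $(a_2,u)$ meets the boundary of $\overline{B\cup C}$ (because $u$ lies inside the triangle $a_2b_1c_2$), is contained in $\overline{\{u\}\cup B\cup C}$. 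That is the missing idea: Claim~1 is a statement about the geometry of $A,B,C,Y$ alone, and is proved directly, not by unfolding the algebraic inclusions involving $X$.
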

One can use Figure \ref{4} with possible identification of $u$ as $y$, and $v$ as $c_j$.
We assume that $u \not \in W= \overline{\{b_1,\dots,b_i,\dots,b_2,c_1,\dots,c_i,\dots,c_2\}}$, since otherwise the claim is obvious.

Draw the line $(u,a_2)$, then $v$ should be in the same semi-plane as $c_1$. Draw the line $(c_1,u)$, then $v$ should be in the semi-plane opposite to $a_2$. Since $u$ is an inner point of triangle $\overline{\{a_2,b_1,c_2\}}$, line $(a_2,u)$ crosses segment $[c_2,b_1]$ at some inner point. Hence, this line crosses another segment of convex polytope $W$, say, at point $w$ (this point does not necessarily belong to configuration that generates $G_0$). Thus,
$v$ belongs to the convex polytope formed by $u,c_1,w$ and all the vertices of the border of $W$ between $c_1$ and $w$. In particular, $v \in \overline{u \cup B\cup C}$, as desired. End of proof of Claim 1.

Since $\overline{A\cup C} \subset \overline{Y \cup A \cup C}$, some vertices of $\overline{Y \cup A \cup C}$ should be from $Y\setminus  \overline{A\cup C}= Y\setminus P=y\setminus \overline{B\cup C}$. Let $y_1$ be the first element from $Y\setminus P$ that appears  after $a_2$ in the circular order of vertices 
of $\overline{Y \cup A \cup C}$ given in Claim 1. According to Claim 1, no point from $C$ can appear between $a_2$ and $y_1$, since, otherwise, $y_1$ will be in
$\overline{B\cup C}$.  Thus, we have in the sequence from $a_2$ to $y_1$ only elements from $A$.

Similarly, let $y_2$ be the first element from $Y\setminus P=Y\setminus \overline{A\cup B}$ that appears in the circular order of vertices $b_2,\dots,b_1,a_2,\dots,a_1,\dots, b_2$ of
$\overline{Y\cup A\cup B}$ between $a_1$ and $b_2$. Then there is only elements from $A$ in this sequence between $a_1$ and $y_2$.

\begin{claim}
The sequences of segements forming the border of $\overline{Y\cup A\cup C}$ from $c_1$ to $a_2$ containing point $y_1$, and the border of $\overline{Y\cup A\cup B}$ from $b_2$ to $a_1$ containing point from $y_2$ intersect at some point
(not necessarily the point of configuration forming $G_0$).
\end{claim}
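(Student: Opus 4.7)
The plan is to give a topological intersection argument inside the convex polygon $M = \overline{A\cup B\cup C}$, which is homeomorphic to a closed disk. The first step is to place both curves inside $M$: since $Y\subseteq M$ and $A\cup B, A\cup C\subseteq M$, both convex hulls $\overline{Y\cup A\cup C}$ and $\overline{Y\cup A\cup B}$ lie inside $M$. Consequently, the arcs $\gamma_1$ (from $c_1$ to $a_2$ through $y_1$, along the boundary of $\overline{Y\cup A\cup C}$) and $\gamma_2$ (from $b_2$ to $a_1$ through $y_2$, along the boundary of $\overline{Y\cup A\cup B}$) are embedded polygonal arcs in $M$. Moreover, their non-endpoint portions lie in the topological interior of $M$: every $Y$-vertex occurring on $\gamma_i$ lies in the central triangle, which is interior to $M$, and an edge from such a vertex to a boundary vertex $c_1, a_2, a_1$, or $b_2$ meets $\partial M$ only at its boundary endpoint since $M$ is convex.

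Next I would pin down the cyclic position of the four endpoints on $\partial M$. Using the cluster structure established in Lemma \ref{abc}, with $c_1, c_2$ the extreme vertices of the $C$-cluster ($c_1$ adjacent to the $B$-cluster, $c_2$ adjacent to the $A$-cluster) and analogous conventions for $a_1, a_2, b_1, b_2$, the vertices of $\partial M$ appear in the circular order $c_1, \ldots, c_2, a_1, \ldots, a_2, b_1, \ldots, b_2, c_1$. Hence the four endpoints $c_1, a_1, a_2, b_2$ appear on $\partial M$ in the cyclic order $c_1, a_1, a_2, b_2$, so the pair $\{c_1, a_2\}$ interleaves the pair $\{a_1, b_2\}$ on $\partial M$.

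The final step is to invoke the standard topological fact that in a closed topological disk $D$, an embedded arc connecting one interleaving pair of boundary points separates $D$ into two components whose boundaries meet $\partial D$ in the two complementary boundary arcs; any embedded arc joining the other interleaving pair must therefore cross the first arc. Applied here, $\gamma_1$ separates $M$ into two half-disks, one meeting $\partial M$ in the arc from $c_1$ to $a_2$ passing through $a_1$ and the other meeting $\partial M$ in the arc passing through $b_2$; since $\gamma_2$ starts in the closure of one half and ends in the closure of the other, it must cross $\gamma_1$, so $\gamma_1\cap\gamma_2 \neq \emptyset$. Because $c_1, a_1, a_2, b_2$ are four distinct points and the non-endpoint parts of each $\gamma_i$ lie in the interior of $M$, the intersection point is a genuine interior crossing of the two polygonal arcs, as the claim requires. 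I do not expect a substantive difficulty beyond bookkeeping: the only care needed is to verify the cyclic order of the four endpoints from the cluster arrangement, after which the Jordan-arc separation argument is routine.
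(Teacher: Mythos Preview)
Your topological approach is sound in spirit and genuinely different from the paper's argument. The paper proceeds by explicit half-plane reasoning: after disposing of the degenerate case $a_1=a_2$, it uses the line $(a_1,b_2)$ to separate $c_1$ from $a_2$, locates a segment $[u,v]$ of the $\overline{Y\cup A\cup C}$-border that crosses $[a_1,b_2]$, and then argues, line by line, that the $\overline{Y\cup A\cup B}$-border must switch from crossing the ``right'' ray of one supporting line of $\gamma_1$ to the ``left'' ray of the next, forcing an intersection with an actual edge of $\gamma_1$. Your Jordan-arc separation packages all of this into one stroke, which is cleaner conceptually.

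That said, two points need repair. First, you assert that $c_1,a_1,a_2,b_2$ are four distinct points; the paper explicitly allows $a_1=a_2$ (a single $A$-vertex on $\partial M$), in which case the two arcs simply share that endpoint and the claim is immediate --- but your interleaving argument as written does not cover it. Second, your justification that the non-endpoint portions of $\gamma_1,\gamma_2$ lie in the interior of $M$ only treats $Y$-vertices. In fact $\gamma_1$ can contain vertices from $A$ (between $a_2$ and $y_1$) or from $C$ (between $y_1$ and $c_1$), and likewise for $\gamma_2$; such vertices may lie on $\partial M$ (e.g.\ an $A$-point on the edge $[a_2,b_1]$), so the interiority claim is not literally true. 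The fix is to abandon that claim and instead use directly that $\gamma_1$ together with the $\partial M$-arc through $a_1$ bounds the convex region $\overline{Y\cup A\cup C}$; then one checks that $b_2$, being a vertex of $M$ outside the $A$- and $C$-clusters, is not in this region (or already lies on $\gamma_1$, in which case you are done), while $a_1$ is. This is more than the ``bookkeeping'' you anticipate, but it is routine once formulated correctly.
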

If only one vertex in $\overline{A\cup B\cup C}$ is from $A$, i.e. $a_1=a_2$, then $a_1$ might be the only point of intersection of $\overline{Y \cup A\cup B}$
and $\overline{Y \cup A\cup C}$. In this case we assume that this is point of intersection of borders of these two convex polygons stated in the claim.

Otherwise, points $c_1$ and $a_2$ are separated by line $(a_1,b_2)$.
If $c_1,t,\dots,s,w,u,v,$\\$\dots,a_2$ are the vertices of $\overline{Y\cup A\cup C}$, on the path from $c_1$ to $a_2$ that has point $y_1$, then one of the segments of this border, say, $[u,v]$, will cross $[a_1,b_2]$.  

Now $a_1$ and $b_2$ are separated by line $(u,v)$, hence, by all the lines
$(w,u)$,$(s,w) \dots,$ $(c_1,t)$. Therefore, the vertices of $\overline{Y\cup A\cup B}$ on the path from $b_2$ to $a_1$ and containing $y_2$, will cross each of those lines.
It should cross line $(c_1,t)$ on the ''right'' ray, i.e. on the ray with endpoint $c_1$ that contains $t$. On the other hand, it can cross line $(u,v)$ only on the ''left'' ray, i.e. on the ray with the endpoint $v$ that contains $u$. There should be a sequence of vertices in $\overline{Y\cup A\cup C}$, say, $s,w,u,v$, where the sequence of segements of $\overline{Y\cup A\cup B}$ will cross $(s,w)$ on the ''right'' ray, while it will cross $(u,v)$ on the ''left'' ray. This implies it will cross one of segments
$[s,w],[w,u],[u,v]$. End of proof of claim. 

Let us call a point of intersection from Claim 2 by $O$. Note again that, unlike points from $A,B,C$ or $Y$, point $O$ is just a geometrical location of intersection of some segments formed by points from $A\cup Y$. There are three possibilities for positioning of points $y_1,y_2$ and $O$ (see Figure \ref{5}). In all three cases,
$V=\overline{Y\cup A \cup B} \cap \overline{Y\cup A \cup C}$ is a convex polytope formed by points from $A$, point $O$ and  all the points prior to $O$ on the path from $a_1$ to $b_2$ and on the path from $a_2$ to $c_1$. According to the assumption, $X\subseteq V$ and $Y \subseteq \overline{X\cup B\cup C}$. We need to show that some point from $X\setminus P$ belongs to $\overline{Y\cup A}$.

\begin{figure}[ht]
\begin{center}
\includegraphics[scale=.45]{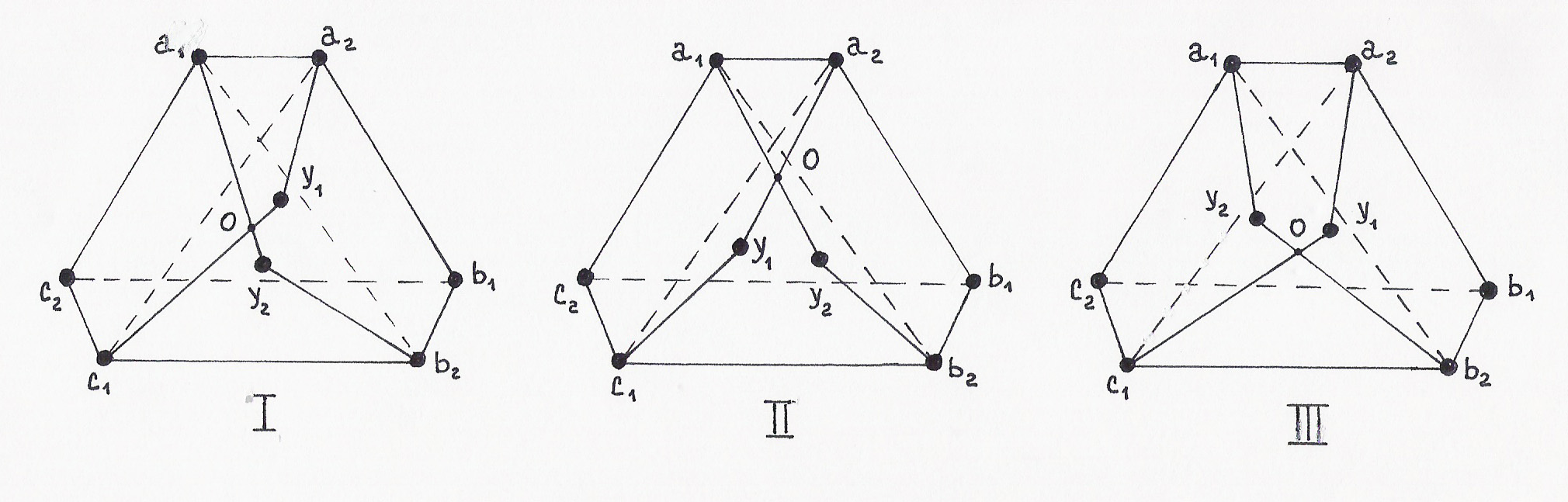}
\caption{}
\label{5}
\end{center}
\end{figure}

(I) On the path from $a_1$ to $b_2$, point $O$ occurs prior to $y_2$, but on the path from $a_2$ to $c_1$ point $O$ occurs after $y_1$. Evidently, $O$ belongs to 
$\overline{Y\cup A}$, since $O$ is on a segment connecting two points from $A\cup y_2$. We want to show that any vertex of $V$ between $O$ and $y_1$ (which is also a vertex of $\overline{Y\cup A\cup C}$)  cannot be from $C$. Indeed, if one vertex would be $c \in C$, then we can apply Claim 1 to vertex $c$ in place of $u$, and any vertex  of $\overline{Y\cup A \cup C}$ on the path from $c$ to $c_1$ in place of $v$. Then $v \in \overline{B\cup C}$. In particular, $O$ is in $\overline{B\cup C}$.
We can apply now a symmetric statement of Claim 1 to the points on the border of $\overline{Y\cup A \cup B}$, identifying $u$ with $O$ and $v$ with $y_2$. Then $y_2 \in \overline{B\cup C}$, a contradiction. 

Thus, all the vertices of $V$ must be in $\overline{Y\cup A}$, which proves $X\subseteq \overline{Y\cup A}$.

(II)  Both $y_1,y_2$ occur after $O$ on the corresponding paths. Then all the vertices of $V$ are in $\overline{Y\cup A}$, which is needed.

(III) Both $y_1,y_2$ occur prior to $O$ on the corresponding pathes. According to Claim 1, points of the path from $a_2$ to $c_1$ that appear after $y_1$ belong to  $\overline{y_1 \cup B \cup C}$, in particular, $O \in \overline{y_1\cup y_2\cup B \cup C}$, thus, the part of polytope $V$ formed by $O,y_1,y_2$ and all the vertices of both paths between
$y_1$ and $O$, and $y_2$ and $O$, correspondingly, belong to $\overline{y_1\cup y_2\cup B\cup C}$. If all the points from $X\setminus P$ would be in that part of $V$, we would have
$X \subseteq \overline{y_1 \cup y_2 \cup B\cup C}$. At least one of $y_1,y_2$ should be a vertex of $\overline{y_1 \cup y_2 \cup B\cup C}$. Then it can be in 
$\overline{X \cup B \cup C}$ only when it belongs to $X \cup B \cup C$. But then this points would be in $P$ due to $Y \cap X =P= Y \cap \overline{B \cup C}$, a contradiction. It follows that at least one point from $X\setminus P$ should be in the part of $V$ formed by points from $A \cup y_1 \cup y_2$.
Thus, $X \cap \overline{Y \cup A} > P$.  
\emph{End of proof of Theorem \ref{sharp}}\\

It follows from the proof of the theorem that the following property always holds in any geometry $G_0=\second$, hence, in any of its subgeometry:\\
\[
\text{For all closed sets } X,Y,A,B,C,
\]
\[
\text { if } Y \subseteq \overline{A\cup B\cup C}
\]
\[
Y\cap \overline{A\cup B}= Y\cap\overline{B\cup C}= Y\cap\overline{A\cup C}=Y\cap X=P <Y,X
\]
\[
X\subseteq \overline{Y\cup A\cup B}, X \subseteq \overline{Y\cup A\cup C} \text{ and } Y\subseteq \overline{X\cup B\cup C}
\]
\[
\text{then } X \cap \overline{\{A \cup Y\}}>P.
\] 

We will refer to this property as \emph{the Sharp 2-Carousel Rule}.\\
 
In conclusion of this section, we give an example of the convex geometry that satisfies $2$-Carousel Rule, but does not satisfy the Sharp $2$-Carousel Rule.\\

Let $A=\{a,b,c,x,y\}$ and the collection of closed sets of $(A,-)$ include all one-element and two-element subsets; besides, three-element subsets are $\{x,a,b\}$, 
$\{x,a,c\}$, $\{y,b,c\}$, $\{x,y,w\}$, for $w \in \{a,b,c\}$, and four-element are $\{a,b,x,y\}$, $\{b,c,x,y\}$, $\{a,c,x,y\}$. This implies $x,y \in \overline{\{a,b,c\}}$, $x \in \overline{\{y,a,b\}},\overline{\{y,a,c\}}$, and $y \in \overline{\{x,b,c\}}$. The Sharp 2-\textit{}Carousel Rule fails since $x \not \in \overline{\{y,a\}}$. Hence, $(A,-)$ is not a sub-geometry of any geometry of relatively convex sets.

\section{Concluding remarks}

Problem \ref{first} asks whether any of classes $\mathcal{C}_B$, $\mathcal{C}_f$ is universal for \emph{all} finite convex geometries. In fact, it is enough to check whether every finite \emph{atomistic} convex geometry is a sub-geometry in one of those classes.
Recall that a closure system $\A=(A,-)$ is called \emph{atomistic}, if all one-element subsets of $A$ are closed.
This follows from the result proved in \cite{AGT} (a different proof was given in \cite{AdNa}):
\begin{prop} Every finite convex geometry has a strong atomistic extension. In particular, every finite convex geometry is a sub-geometry of some atomistic convex geometry.
\end{prop}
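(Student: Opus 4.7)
The plan is to establish the existence of a strong atomistic extension by induction on the number of non-closed singletons of $\A=(A,-)$. If every $\{a\}$ is closed in $\A$, then $\A$ itself is atomistic and we take $\B=\A$. Otherwise, fix some $a_0\in A$ with $C:=\overline{\{a_0\}}\supsetneq\{a_0\}$; as noted in the proof of Lemma~\ref{Sub}, the set $D:=C\setminus\{a_0\}$ is closed in $\A$. I would build a one-step strong extension $\A'$ on $A$ in which $\{a_0\}$ becomes closed and no previously-closed singleton loses its status, then iterate.

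For the one-step construction, use the standard shellability of finite convex geometries (every closed set can be built from $\emptyset$ by adding extreme points one at a time) to choose a chain of closed sets of $\A$
\[
\emptyset = X_0\subsetneq X_1\subsetneq\cdots\subsetneq X_k=D
\]
with $|X_{i+1}\setminus X_i|=1$, and set
\[
\mathcal{A}':=\mathcal{A}\cup\{X_i\cup\{a_0\}:0\le i\le k-1\}.
\]
In particular $\{a_0\}=X_0\cup\{a_0\}\in\mathcal{A}'$. I would then verify axioms~(1)--(3) from the introduction for $\mathcal{A}'$. Closure under intersection reduces to a short case analysis using the chain structure: $(X_i\cup\{a_0\})\cap(X_j\cup\{a_0\})=X_{\min(i,j)}\cup\{a_0\}$, and for $Y\in\mathcal{A}$ the intersection $(X_i\cup\{a_0\})\cap Y$ equals $X_i\cap Y\in\mathcal{A}$ when $a_0\notin Y$, and equals $X_i\cup\{a_0\}$ when $a_0\in Y$, because $a_0\in Y$ forces $Y\supseteq C\supseteq X_i$. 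Axiom~(3) for the new sets is provided by the chain itself: $X_{i+1}\cup\{a_0\}$ is obtained from $X_i\cup\{a_0\}$ by adding the single element of $X_{i+1}\setminus X_i$, and $X_{k-1}\cup\{a_0\}$ extends by one element to $C=X_k\cup\{a_0\}\in\mathcal{A}$; the old sets retain the extensions they already had in $\A$.

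The most delicate point is the sublattice condition for the inclusion $\mathcal{A}\hookrightarrow\mathcal{A}'$: for $X,Y\in\mathcal{A}$ one needs the $\A'$-closure of $X\cup Y$ still to coincide with $\overline{X\cup Y}^{\A}$. Any candidate strictly below $\overline{X\cup Y}^{\A}$ and containing $X\cup Y$ would be a new set $X_i\cup\{a_0\}$ with $i<k$. If $a_0\notin X\cup Y$, then $X\cup Y\subseteq X_i\in\mathcal{A}$ forces $\overline{X\cup Y}^{\A}\subseteq X_i\subsetneq X_i\cup\{a_0\}$, so the new set lies strictly above, not below, $\overline{X\cup Y}^{\A}$. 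If $a_0\in X\cup Y$, say $a_0\in X$, then $X\supseteq C$ and so $|X|\ge k+1>i+1=|X_i\cup\{a_0\}|$, so $X\not\subseteq X_i\cup\{a_0\}$. Either way, no obstruction arises. The same argument shows that $\overline{\{b\}}^{\A'}=\overline{\{b\}}^{\A}$ for every $b\ne a_0$, so the number of non-closed singletons drops by exactly one; iterating at most $|A|$ times produces the desired atomistic strong extension $\B$, and the ``in particular'' sub-geometry statement follows by taking the inclusion as the embedding $\phi$.

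The main obstacle is balancing two opposing requirements: simply adjoining $\{a_0\}$ to $\mathcal{A}$ typically violates axiom~(3), since in general $\{a_0\}$ has no one-element closed extension in $\A$, while adjoining too much (for instance passing to the full Boolean lattice $2^A$) destroys the sublattice property, because a join $\overline{X\cup Y}^{\A}$ can strictly exceed the set-theoretic union $X\cup Y$. The chain construction above threads between these pitfalls by inserting exactly one new closed set at each level of a chosen shelling of $C$, each large enough to avoid undercutting the old joins but small enough to furnish the one-element extensions demanded by axiom~(3).
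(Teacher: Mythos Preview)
The paper does not supply its own proof of this proposition; it merely cites \cite{AGT} (and \cite{AdNa} for an alternative argument) and moves on. So there is nothing in the text to compare your argument against line by line.

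Your proof is correct and self-contained. The one-step construction is well designed: the chain $\emptyset=X_0\subsetneq\cdots\subsetneq X_k=D$ exists by the standard shelling of $D$ (repeatedly remove an extreme point), and your verifications of axioms~(1)--(3) and of the join-preservation $\overline{X\cup Y}^{\A'}=\overline{X\cup Y}^{\A}$ for $X,Y\in\mathcal{A}$ go through exactly as written. The two cases in the join argument are the heart of the matter, and both are handled cleanly: in the first, $X\cup Y\subseteq X_i$ forces $\overline{X\cup Y}^{\A}\subseteq X_i\subsetneq X_i\cup\{a_0\}$; in the second, $a_0\in X$ gives $C\subseteq X$, and since $C=X_k\cup\{a_0\}\supsetneq X_i\cup\{a_0\}$ no new set can contain $X$. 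The check that closedness of singletons $\{b\}$ with $b\neq a_0$ is unchanged is likewise correct, so the induction terminates. Finally, transitivity of ``strong extension'' (composition of sublattice inclusions) is immediate, so the iteration yields the desired atomistic $\B$.

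One cosmetic remark: in your Case~1 you say the new set lies ``strictly above'' $\overline{X\cup Y}^{\A}$; more precisely you have shown $\overline{X\cup Y}^{\A}\subseteq X_i\subsetneq X_i\cup\{a_0\}$, so the new set contains $\overline{X\cup Y}^{\A}$, which is exactly what is needed to conclude that the $\A'$-closure of $X\cup Y$ is still $\overline{X\cup Y}^{\A}$.
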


On the other hand, for the description of sub-geometries of class $\mathcal{C}_n$, the proposition above is not of great help, due to the fact the strong atomistic extension  might not preserve the $n$-Carousel Rule.

Indeed, it is enough to give an example of an atomistic extension that does not preserve the $n$-Carath$\acute{\text{e}}$odory Property. 

Consider finite geometry $\G=(\{a,b,c,d,x\},-)$ given by its collection of closed sets $\mathcal{G}=\{\emptyset,a,b,d,ab,ad,bd,cd,abd,acd,abx,adx,bcd,bcdx,acdx,abdx,abcdx\}$. In this convex geometry, for any closed sets $\overline{U}=U, \overline{V}=V$,\\

$\overline{U\cup V} = \left\{
\begin{array}{c l}
  U\cup V \cup \{x\}, & a,b,c\in U\cup V; \\
  \\
  U\cup V, & \mbox{ otherwise}. \\
\end{array}
\right.
$

In particular, this convex geometry satisfies the $3$-Carath$\acute{\text{e}}$odory Property.

Let $\H=(\{a,b,c,d,x\},\tau)$ be another convex geometry on $\{a,b,c,d,x\}$, whose closed sets are all subsets of $\{a,b,c,d,x\}$, except $abcd$. One easily verifies that $\G$ is a sub-geometry of $\H$, therefore, $\H$ is an atomistic extension of $\G$. On the other hand, $3$-Carath$\acute{\text{e}}$odory property fails in $\H$, since $x \in \tau(abcd)$, but $x \not \in \tau(abc) \cup \tau(abd) \cup \tau(acd) \cup \tau(bcd)$. 

It would be interesting to describe necessary and sufficient properties of finite geometries which are sub-geometries of $n$-dimensional geometries of relatively convex sets.
One of main results in \cite{AW2} states that if a finite atomistic convex geometry with $k$ extreme points $a_1,\dots,a_k$ and points $x,y$ in the closure of $a_1,\dots,a_k$, satisfies the so-called Carousel Rule and Splitting Rule then it can be represented as $\alg{Co}(\mathbb{R}^2,A)$, with $A=\{a_1,\dots,a_k,x,y\}$ being some set of points on a plane. In this result the Carousel Rule is slightly more elaborate property than the $2$-Carousel Rule (a version of the Carousel Rule was also formulated in \cite{EdLa}, where the case of one point $x$ in the closure of $a_1,\dots,a_k$ was investigated).

At the moment we are not aware of any example of a finite convex geometry satisfying the $2$-Carousel Rule and the Sharp $2$-Carousel Rule but not representable by relatively convex sets on the plane.
Thus, we would like to ask:

\begin{pbm} Is every finite convex geometry that satisfies $2$-Carousel Rule and the Sharp $2$-Carousel Rule a sub-geometry of some (finite) geometry $\alg{Co}(\mathbb{R}^2,A)$?
\end{pbm}

In \cite{AW1}, the $2$-Carousel Rule was essential in establishing the correspondence between two problems: the representation of an atomistic convex geometry as $\alg{Co}(\mathbb{R}^2,A)$ and the realization of an order type by point configuration on the plane.
See \cite{GoPo} for the definition of an order type and \cite{Ha} for the recent overview and references on the topic.

{\bf Acknowledgements.} The paper was inspired by Maurice Pouzet and a prospect to celebrate his achievements at ROGICS'08 conference in Mahdia, Tunisia.
We are greateful to Nejib Zaguia and other organizers of the conference to make this event possible.

\end{document}